\documentclass[reqno,final,11pt]{amsart}
\usepackage{natbib}  
\usepackage{fancyhdr} 
\usepackage{color} 
\usepackage{hyperref} 
\usepackage{graphicx} 
\usepackage{geometry}
\geometry{
  top=1in,            
  inner=1in,
  outer=1in,
  bottom=1in,
  headheight=3ex,       
  headsep=2ex,          
}
\usepackage[utf8]{inputenc}
\usepackage[T1]{fontenc}
\usepackage{verbatim}


\usepackage{amsmath}
\usepackage{amsthm}
\usepackage{amsfonts}
\usepackage{amssymb}
\usepackage{dsfont}
\usepackage{bm}
\usepackage{parskip}

\usepackage[scr=boondox]{mathalfa}

\usepackage{pgf,tikz}
\usetikzlibrary{arrows,shapes,positioning}
\usetikzlibrary{arrows.meta}
\usetikzlibrary{decorations.pathmorphing}
\usetikzlibrary{patterns,decorations.pathreplacing}
\usetikzlibrary{patterns}


\definecolor{aleacolor}{rgb}{0.16,0.59,0.78}

\hypersetup{
breaklinks,
colorlinks=true,
linkcolor=aleacolor,
urlcolor=aleacolor,
citecolor=aleacolor}

\pagestyle{fancy} \fancyhf{} \fancyhead[RO,LE]{\small\thepage}
\fancyhead[RE]{\small\shortauthors} \fancyhead[LO]{\small\shorttitle}

\renewcommand{\cite}{\citet}


\theoremstyle{plain}
\newtheorem{theorem}{Theorem}[section]                                          
\newtheorem{proposition}[theorem]{Proposition}                          
\newtheorem{lemma}[theorem]{Lemma}
\newtheorem{corollary}[theorem]{Corollary}

\theoremstyle{definition}
\newtheorem{definition}[theorem]{Definition}
\theoremstyle{remark}
\newtheorem{remark}[theorem]{Remark}

\makeatletter \@addtoreset{equation}{section} \makeatother


\newcommand{\R}{\mathds{R}}
\newcommand{\N}{\mathds{N}}

\newcommand{\ind}{\mathds{1}}
\newcommand{\E}[1]{\mathds{E}\left[#1\right]}
\newcommand{\itg}{\displaystyle\int}

\newcommand{\M}{\bm{M}}
\newcommand{\Ma}{\mathcal{M}}
\newcommand{\Ge}[2]{\bm{\Gel(} #1,#2 \bm{)} }
\newcommand{\T}{\mathcal{T}}
\newcommand{\A}{\mathcal{A}}

\newcommand{\C}{\mathcal{C}}

\newcommand{\Z}{\mathds{Z}}
\renewcommand{\L}{\bm{\mathscr{L}}}
\newcommand{\ones}{\bm{1}}
\newcommand{\e}{\bm{e}}
\renewcommand{\S}{\mathbb{S}}
\newcommand{\F}{\mathcal{F}}
\newcommand{\h}{\bm{h}}

\newcommand{\Ta}{\mathscr{T}}
\newcommand{\bTa}[1]{\bm{\mathscr{T}}_{\mkern-7mu #1}}
\newcommand{\g}{\bm{g}}
\renewcommand{\H}{\bm{H}}
\renewcommand{\Re}{\bm{\mathscr{R}}}
\newcommand{\Vone}[1]{\Vert #1 \Vert_{L^1}}
\newcommand{\Vity}[1]{\Vert #1 \Vert_{L^{\infty}}}
\newcommand{\Vityty}[1]{\Vert #1 \Vert_{L^{\infty}L^{\infty}}}
\newcommand{\Vityone}[1]{\Vert #1 \Vert_{L^{\infty}L^{1}}}
\newcommand{\pos}{\R_+^{\M}}
\newcommand{\U}{\mathcal{U}}

\newcommand{\Pois}{\mathscr{P}\!\mathscr{o}\!\mathscr{i}\!\mathscr{s}}

\newcommand{\Vvert}{\vert \hspace{-1pt} \vert \hspace{-1pt} \vert}

\newcommand{\CE}{\ce_{0}}
\newcommand{\CEt}{\ce}

\newcommand{\Res}{\mathcal{R}}

\newcommand{\trp}{^{\intercal}}

\newcommand{\fonction}[5]{\begin{array}[t]{lrcl}
#1: & #2 & \longrightarrow & #3 \\
    & #4 & \longmapsto & #5 \end{array}}

\DeclareMathOperator{\card}{Card}
\DeclareMathOperator{\diag}{diag}
\DeclareMathOperator{\spr}{SpR}
\DeclareMathOperator{\Gel}{Ge}
\DeclareMathOperator{\Id}{Id}
\DeclareMathOperator{\tp}{tp}
\DeclareMathOperator{\bd}{bd}
\DeclareMathOperator{\ce}{LC}
\DeclareMathOperator{\Lip}{Lip}


\begin{document}

\title[Exponential Tail Estimates for Poisson Branching Processes]{Exponential Tail Estimates for Multitype Poisson Branching Processes and Application to Hawkes Processes}

\author{Théo Leblanc}
\address{CEREMADE, Université Paris Dauphine - PSL}
\email{leblanc@ceremade.dauphine.fr} 

\thanks{}
\subjclass[2020]{60J80, 60E15, 60G55, 60J85, 47J26} 
\keywords{Bienaymé-Galton-Watson tree, Poisson Cluster, Exponential moments, Hawkes process, Multitype, Branching process, Fixed-point equations}

\begin{abstract}
	We establish exponential moment bounds for linear functionals of the total progeny of multitype Poisson Bienaymé-Galton-Watson trees. Our estimates are explicitly characterized in terms of the offspring mean matrix and the coefficients of the linear functional. As an application, we derive type-specific exponential moment bounds for multitype Hawkes processes, yielding improved results in high-dimensional settings. We also obtain exponential tail estimates for inhomogeneous Poisson clusters, with bounds that reflect the decay properties of the interaction kernels defining the clusters. These results provide useful probabilistic tools for the analysis of branching structures arising in Hawkes processes and related models.
\end{abstract}

\maketitle

\section{Introduction}

Bienaymé-Galton-Watson (BGW) trees are among the simplest classes of branching processes and serve as fundamental building blocks for many more complex stochastic models. They describe the evolution of a population originating from a single ancestor. The key property of BGW trees is that individuals reproduce independently of one another, each producing a random number of offspring according to common offspring distributions; see, for instance, \cite{athreya_branching_1972,harris_theory_1963}.

Multitype Bienaymé-Galton-Watson trees extend this framework by allowing individuals to belong to different types or classes. In this setting, the offspring distribution depends on the type of the parent, and offspring may themselves belong to different types. As in the single-type case, reproduction occurs independently across individuals, but the multitype structure enables the modeling of heterogeneous populations and interactions between different categories of individuals.

In the present article, we focus on the case where the offspring distributions are Poisson and characterized by a matrix $\H$, in the sense that an individual of type $i$ produces $\Pois(H_{ij})$ offsprings of type $j$. This framework remains quite general and naturally arises in the study of Hawkes processes, which have important applications in biology, epidemiology, finance, neuroscience, and earthquake modeling, among many others; see for example \cite{hawkes_spectra_1971,ogata_statistical_1988,lambert_reconstructing_2018,bacry_limit_2013}. The connection between Poisson BGW trees and Hawkes processes is provided by Poisson cluster representations. Indeed, Hawkes processes admit a cluster representation in which clusters arrive according to a Poisson process and each cluster is distributed as a Poisson BGW tree endowed with temporal marks \cite{hawkes_cluster_1974}. More precisely, an individual of type $i$ born at time $s$ produces offsprings of type $j$ according to a Poisson point process with intensity $h_{ij}(t-s)\,dt$, where $\h=(h_{ij})_{i,j\in\M}$ are called interaction functions.

The branching structure of BGW trees leads naturally to fixed-point equations satisfied by their Laplace transforms; see for example \cite{dwass_total_1969,good_number_1949,harris_theory_1963}. In the single-type setting, these equations are well understood through their connection with hitting times of random walks \cite{dwass_total_1969,good_number_1949,wang_total_2014}. When the offspring distribution is Poisson, the distribution of the total progeny is even explicit and is given by the Borel distribution; see \cite{bordenave_large_2007,bondesson_class_2004}. In contrast, the multitype case is substantially more involved, and explicit formulas are no longer available even in the Poisson setting; see \cite{karim_compound_2025,karim_exact_2021}.

For Hawkes processes, exponential estimates are classically derived through the cluster representation by controlling the contribution of each cluster. Previous works, such as \cite{reynaud-bouret_non_2007} and \cite{hansen_lasso_2015}, imposed decay assumptions on the interaction functions in order to guarantee the convergence of these contributions. However, \cite{leblanc_exponential_2024} showed that this convergence actually depends only on the matrix $\H$ which gathers the $L^1$ norms of the interaction functions, independently of their decay properties. More precisely, the relevant condition is simply that $\spr(\H)<1$, where $\spr$ denotes the spectral radius. This condition is also optimal, as it characterizes the existence of a stationary version of the process \cite{bremaud_stability_1996}.

A key result of \cite{leblanc_exponential_2024} is the derivation of quantitative exponential moment bounds for the total progeny of multitype BGW trees with offspring distributions $\Pois(\H)$. Nevertheless, these bounds become less informative in high-dimensional settings because they are not type-specific. In many applications, such as neuronal modeling where each type corresponds to a neuron, one is often interested only in the number of individuals associated with a single type, or with a small subset of types. By contrast, the bounds obtained in \cite{leblanc_exponential_2024} control the total population over all types simultaneously and therefore necessarily depend on the total number of types, which may itself become large.

Tail estimates for clusters also play a central role in the probabilistic analysis of Hawkes processes. In particular, they provide key tools for establishing properties such as mixing \cite{boly_mixing_2023} and regeneration \cite{graham_regenerative_2021}. This importance stems from the fact that the temporal length of clusters is intimately related to the dependence structure of Hawkes processes: long clusters generate long-range temporal dependencies, while quantitative control of cluster lengths yields quantitative control of dependence.

In this paper, we establish three main results. First, we derive quantitative exponential moment bounds for type-weighted total progeny of multitype Poisson BGW trees, together with applications to Hawkes processes. Second, we provide an exact characterization of the set of weights for which the exponential moment of the type-weighted total progeny is finite. Third, we obtain exponential tail estimates for type-weighted Poisson clusters in the general inhomogeneous setting.

\textbf{Exponential bounds for Poisson BGW trees.}
We establish a constructive characterization of exponential moments together with their functional properties and quantitative type-specific bounds; see Theorem \ref{theorem1}. Except in the single-type setting, where explicit formulas are available \cite{bordenave_large_2007}, no such type-specific quantitative bounds were previously known. In particular, \cite{leblanc_exponential_2024} provides only global bounds uniform over all types. We then apply these results to multitype Hawkes processes, obtaining type-specific exponential estimates for linear functionals of the process; see Theorem \ref{theorem2}. These estimates improve upon those of \cite{leblanc_exponential_2024}, especially in high-dimensional regimes. We also analyze how the bounds scale when one focuses on a single type or on a small subset of types, depending on the structure encoded by the matrix $\H$. These results are particularly relevant for statistical applications.

\textbf{Laplace domain of Poisson BGW trees.}
Beyond quantitative bounds on type-weighted total progeny, we characterize the set of weights for which the Laplace transform is finite. In the single-type setting, the critical weight is known explicitly; see \cite{bordenave_large_2007}. In \cite{karim_compound_2025}, Proposition 1, the authors derive an implicit characterization of both the Laplace transform and its domain in the multitype setting. Their framework allows arbitrary real weights, whereas we restrict attention to nonnegative weights. This restriction is natural for the statistical applications we have in mind and enables the use of monotonicity arguments, avoiding more abstract tools such as the preimage theorem employed in \cite{karim_compound_2025}. As a consequence, we obtain, in Theorem \ref{thm exact multD}, a more explicit characterization of the finiteness domain together with a clearer understanding of the multiple solutions of the associated fixed-point equation.

\textbf{Tail estimates for Poisson clusters.}
We derive type-specific estimates for the temporal tails of inhomogeneous Poisson clusters. Temporal tail bounds are obtained in three different regimes: exponentially decaying interaction functions, polynomially decaying interaction functions, and compactly supported interaction functions. This broad framework extends and improves earlier single-type estimates established mainly for compactly supported interactions in \cite{reynaud-bouret_non_2007}. The main originality of our work is that it goes beyond the homogeneous setting. Our results are established for inhomogeneous clusters, where the interaction functions may depend both on the parent birth time $s \in \R$ and the child birth time $t \in \R$, through functions $\h(s,t)$, rather than only on the delay $t-s$ through functions of the form $\h(t-s)$. Related inhomogeneous frameworks were considered in \cite{roueff_locally_2016}, where the authors studied regularity properties of Laplace transforms for locally stationary Hawkes processes in the single-type setting. The inhomogeneous framework substantially increases flexibility and enlarges the range of applications.

The paper is organized as follows. In Section \ref{section2}, we introduce the main objects and notation. Section \ref{section3} presents quantitative exponential bounds for type-weighted total progeny of multitype Poisson BGW trees and their applications to Hawkes processes. In Section \ref{section4}, we establish tail estimates for inhomogeneous Poisson cluster processes in the three decay regimes described above. Finally, Section \ref{section5} is devoted to the Laplace transform of the type-weighted total progeny of a Poisson BGW tree, including a characterization of its domain of finiteness and the construction of explicit values of the transform. Several technical results on bivariate convolutions used in Section \ref{section4} are collected in Appendix \ref{appendixA}.

\section{Settings}\label{section2}

Let $\N$ denote the set of nonnegative integers and let $\N^*=\N\setminus\{0\}$. Let $\R$ denote the set of real numbers and let $\R_+=[0,\infty)$ denote the set of nonnegative real numbers. Finally, let $M\in\N^*$ and denote by $\M=[\![1,M]\!]$ the set of types.

In this paper, trees are represented using the Ulam-Harris-Neveu encoding. Therefore, a tree $\tau$ is a non-empty subset of $\U:= \bigcup_{n\in\N} (\N^*)^n$ with $(\N^*)^0 = \{ \varnothing \}$ such that whenever $ak\in\tau$ for some $k\in\N^*$, one also has $a\in\tau$. In this case, we say that $b=ak$ is a child of $a$. It follows that $\varnothing \in \tau$, and we say that it is the root of $\tau$. For $a\in\tau$, by definition there exists $n\in\N$ such that $a\in (\N^*)^n$. The integer $n$ is called the generation of $a$ and is denoted $\vert a \vert$.

An $\M$-type tree, i.e. a tree with types in $\M$, is an object $(a,\tp(a))_{a\in\tau}$ where $\tau$ is a tree and $\tp(a)\in\M$ is the type of $a$. A random tree (resp. $\M$-type tree) is a random variable taking values in the set of trees (resp. $\M$-type trees).

\subsection{Multitype Poisson BGW trees}

In the sequel, when Poisson BGW trees are considered, the matrix $\H = (H_{ij})_{i,j\in \M}$ is a matrix with nonnegative entries, corresponding to the offspring mean matrix of the BGW tree. The following definition introduces $\M$-type BGW trees with offspring distributions $\Pois(\H)$.

\begin{definition}[$\Pois(\H)$-BGW tree]
	Let $m_0\in\M$. The random $\M$-type tree $\T^{m_0} = (a,\tp(a))_{a\in\A}$ is a $\Pois(\H)$-BGW tree with root of type $m_0$ if,
	\begin{itemize}
    	\item $\A$ is a random tree with root type $\tp(\varnothing)=m_0$.
    	\item Each individual $a\in\A$ with type $\tp(a)=m$ reproduces independently as follows: independently for each $m'\in\M$, it has $\Pois(H_{m,m'})$ children of type $m'$.
	\end{itemize}
\end{definition}

For a random $\M$-type tree $\T = (a,\tp(a))_{a\in\A}$ we define its type-count vector by
\begin{equation*}
    \card_{\M}(\T) = \big( \card(\{a\in\A \mid \tp(a)=m \})\big)_{m\in\M}.
\end{equation*}
For $n\in\N$ we define
\begin{equation*}
    \T_n = (a,\tp(a))_{a\in\A, \ \vert a \vert = n} \ , \quad \T_{\leq n} = (a,\tp(a))_{a\in\A, \ \vert a \vert \leq n} \ , \quad \T_{\geq n} = (a,\tp(a))_{a\in\A, \ \vert a \vert \geq n}.
\end{equation*}
The corresponding type-count vectors for $\T_n$, $\T_{\leq n}$, and $\T_{\geq n}$
are defined analogously.

\subsection{Inhomogeneous Poisson clusters}

Multitype Poisson BGW trees are closely linked to linear multivariate Hawkes processes and more generally to Poisson clusters. A Poisson cluster consists of a random Poisson $\M$-type tree together with additional temporal marks attached to each individual. These temporal marks are called birth dates.

When Poisson cluster processes are considered, their distribution is characterized by bivariate functions $\h = (h_{ij})_{i,j\in\M}$ where $h_{ij} : \R\times \R \longrightarrow \R_+$ are measurable functions. The matrix $\H$ is defined by $\H= \big(\sup_{s\in\R} \Vone{h_{ij}(s,\cdot)}\big)_{i,j\in\M}$, and is assumed to have finite entries, where $\Vone{\cdot}$ denotes the usual $L^1$ norm. 

\begin{definition}[$\Pois(\h)$-cluster]
	Let $m_0\in\M$ and $t_0\in\R$. Then, $G^{m_0}_{t_0}=(a,\tp(a),\bd(a))_{a\in\A}$, where $\A$ is a random tree, $\tp(a)\in\M$ is the type of $a$ and $\bd(a)\in\R$ is the birth date of $a$, is an inhomogeneous $\Pois(\h)$-cluster with root of type $m_0$ and born at time $t_0$ if,
\begin{itemize}
    \item The root of $\A$ has type $\tp(\varnothing)=m_0$ and birth date $\bd(\varnothing) = t_0$.
    \item Each $a\in\A$ with type $\tp(a)=m\in\M$ reproduces as follows:
    \begin{itemize}
        \item independently for each $m'\in\M$, $a$ has $\Pois(\Vone{ h_{m,m'}(\bd(a),\cdot)})$ children of type $m'$,
        \item independently, for each child $b$ of $a$ with type $\tp(b)=m'$ the birth date $\bd(b)$ has density
        \[ \bd(b) \sim \frac{h_{m,m'}(\bd(a),t)}{\Vone{h_{m,m'}(\bd(a),\cdot)}} dt.\]
    \end{itemize}
\end{itemize}
\end{definition}

\begin{remark}
	If, for every $i,j\in\M$, the function $h_{ij}(s,t)$ depends only on $t-s$, we are in the homogeneous case. With a slight abuse of notation, we still call $h_{ij}$ the function that satisfies $h_{ij}(s,t) = h_{ij}(t-s), \ s,t\in\R$. In this case $\Vone{h_{ij}(s,\cdot)}$ does not depend on $s$ and we have $\H = (\Vone{h_{ij}})_{ij\in\M}$. If in addition birth dates are removed, the underlying genealogical structure is a $\Pois(\H)$-BGW tree. If moreover we have $h_{ij}(s) = 0$ for $s \leq 0$, then $\Pois(\h)$-clusters corresponds to the usual clusters of Hawkes processes with interaction functions $\h$, see \cite{hawkes_spectra_1971} and \cite{hawkes_cluster_1974}.
\end{remark}
\begin{remark}
	We do not require $\h(s,t) =0$ for $t\leq s$ and thus, it is possible for children to be born before their parents.
\end{remark}

Given $I\subset \R$ and a cluster $G=(a,\tp(a),\bd(a))_{a\in\A}$ we define the trace of $G$ on $I$ by
\begin{equation*}
    G\cap I = (a,\tp(a),\bd(a))_{a\in\A, \ \bd(a)\in I} \ .
\end{equation*}

Finally we also define the type-count vector of $G\cap I$ by
\begin{equation*}
    \card_{\M}\big(G\cap I\big) = \Big(\card\big( \{ a\in\A \mid \bd(a)\in I, \ \tp(a)=m\}\big)\Big)_{m\in\M}.
\end{equation*}

\subsection{Notation}

Throughout this paper we use the following notation.

We denote by $\pos$ the set of vectors of $\R^{\M}$ with nonnegative entries,
\[\pos = \big\{x\in\R^{\M} \mid \forall i\in\M, \ x_i \geq  0\big\}.\]
For any vector $x\in\R^{\M}$ we denote $\vert x \vert_{\infty}$ the supremum norm on $\R^{\M}$ defined by $\vert x\vert_{\infty} = \max_{m\in\M} \vert x_m\vert$.\\
Similarly, for $p\ge1$, we define the usual $\ell^p$ norms on $\R^{\M}$ by, $\vert x\vert_p = \big(\sum_{m\in\M} \vert x_m\vert^p\big)^{1/p}$ for $p\geq 1$ and $x\in\R^{\M}$.\\
For any $1\leq p\leq \infty$, the operator norm associated to $\vert \cdot\vert_{p}$ is defined as follows. For $A$ a matrix of size $n\times q$ we have
\[\Vvert A \Vvert_{p} = \sup_{x\in\R^q\setminus \{0\}} \frac{\vert Ax\vert_{p}}{\vert x\vert_{p}}.\]
For $m\in\M$, the vector $\e_m\in\R^{\M}$ is the vector with zeros everywhere except at entry $m$ where it is one,
\[\e_m = (\ind_{m'=m})_{m'\in\M}.\] 
We denote by $\ones = (1,\ldots,1)\in\R^{\M}$ and $\Id$ the identity matrix of dimension $\M$.\\
The usual scalar product between two vectors $x = (x_m)_{m\in\M}\in\R^{\M}$ and $y=(y_m)_{m\in\M}\in\R^{\M}$ is
\[x \cdot y = x\trp y = \sum_{m\in\M} x_m y_m.\]
For any real function $f:\R\to\R$ and any vector $x\in\R^{\M}$, we define $f(x)$ by the entrywise application of $f$,
\[f(x)=\big(f(x_m)\big)_{m\in\M}.\]
For any vector $x$ we define $\diag(x)$ the diagonal square matrix, its dimension is given by the dimension of $x$, with diagonal entries given by $x$.\\
We denote by $\Ma_{n,p}(\R)$ the set of real matrices of size $n\times p$. For matrices $A,B\in\Ma_{n,p}(\R)$ we define the partial orders $\prec$ and $\preceq$ by:
\[ A \prec B \ \Leftrightarrow \ \forall i,j, \ A_{ij} < B_{ij}\]
and 
\[A \preceq B \ \Leftrightarrow \ \forall i,j, \ A_{ij} \leq  B_{ij}.\]
The spectral radius of a real square matrix $A$ is denoted by $\spr(A)$ and defined as the largest magnitude of the eigenvalues of $A$. 

The $L^1$ norm of a real function $f:\R\longrightarrow\R$ is defined by
\[ \Vone{f} = \itg_{\R} \vert f(x)\vert dx.\]
The $L^{\infty}$ norm of $f$ is defined by
\[\Vity{f} = \sup_{x\in\R} \vert f(x)\vert.\]

Finally, let us introduce the following definition.
\begin{definition}\label{GE}
    Let $A$ be a real square matrix. We say that $A$ satisfies the property $\Ge{r}{K}$ with $r,K\geq 0$ and denote $A\in\Ge{r}{K}$ if
    \begin{equation*}
        \forall n\in\N^*, \ \Vvert A^n \Vvert_{\infty} \leq Kr^n.
    \end{equation*}
\end{definition}

\begin{remark}
	The well-known Gelfand Theorem states that for any square matrix $A$, for any norm $\Vert \cdot \Vert$, we have $\Vert A^n \Vert ^{1/n} \xrightarrow[n\to\infty]{} \spr(A)$. Therefore, for every $r>\spr(A)$, there exists $K<\infty$ such that $A\in \Ge{r}{K}$ and we have $\spr(\H)<1 \iff \exists  r <1, \ \exists K<\infty, \ \H\in \Ge{r}{K}$. Since we are interested in subcritical BGW trees, the well-known condition $\spr(\H)<1$ can be replaced by $\H\in\Ge{r}{K}$ for some $r<1$ and $K<\infty$.
\end{remark}

\section{Exponential estimates for Poisson BGW trees and application to Hawkes processes}\label{section3}

In this section, we state our main results on exponential estimates for BGW trees and their applications to Hawkes processes. The proofs extensively use fixed-point and monotonicity arguments.

\subsection{Type-specific exponential estimates for Poisson BGW trees}\label{sec3sub1}

Theorem \ref{theorem1} is the main result of this section. It provides a constructive characterization together with quantitative bounds for the exponential moments of the type-weighted total progeny $u\trp \card_{\M}(\T)$ of a $\Pois(\H)$-BGW tree $\T$, where $u\in\pos$ is a vector of nonnegative weights. The theorem identifies the logarithmic Laplace transform through a nonlinear fixed-point equation and establishes explicit sufficient conditions on $\H$ and $u$ ensuring its finiteness. These estimates form the basis for the applications to Hawkes processes developed later in Section \ref{section3.3}. The proof of Theorem \ref{theorem1} is given in Section \ref{section3.2}.

\begin{theorem}\label{theorem1}
    Let $\T^{m}$ be a $\Pois(\H)$-BGW tree with root of type $m\in\M$. For $u\in\pos$, define $\L(u)\in [0,\infty]^{\M}$ by,
    \begin{equation}\label{th1eq1}
    	\exp\big( \e_{m}\trp \L(u)\big) = \E{e^{u\trp \card_{\M}(\T^{m})}}, \quad \forall m\in\M.
    \end{equation}
    Then $\L(u) \in [0,\infty]^{\M}$ satisfies the following properties.
    \begin{enumerate}
        \item The following finiteness criterion holds:
        \begin{equation}\label{th1eq2}
        \vert \L(u) \vert_{\infty} < \infty \ \Longleftrightarrow \ \exists  x\in\pos, \ x = u+\H(e^x-\ones).
        \end{equation}
        \item If $\vert \L(u) \vert_{\infty} < \infty$ then we have
        \begin{equation}\label{th1eq3}
            \L(u) = u+\H(e^{\L(u)}-\ones)
        \end{equation}
        and $\L(u)$ is the smallest solution (for $\preceq$) of Equation \eqref{th1eq3} among all solutions in $\pos$.       	
        \item For any $u,v \in \pos$ and $0\leq s\leq 1$ we have
    	\begin{equation}\label{th1eq4}
    		\L(u)+\L(v) \preceq \L(u+v), \quad \text{and} \quad \L(su+(1-s)v) \preceq s\L(u) + (1-s)\L(v).
    	\end{equation}
    	\item For $u\in\pos$, if $\spr\big(\H \diag(e^{\L(u)})\big) <1$ then we have 
    	\begin{equation}\label{th1eq5}
    		\L(u) \preceq \Big(\Id - \H \diag(e^{\L(u)})\Big)^{-1} u.
    	\end{equation}
        \item If $\H\in\Ge{r}{K}$ with $r<1$ and $K<\infty$, and if we define
        \begin{equation}\label{th1eq6}
            t_0(r,K) = \frac{\log\big( \frac{1+r}{2r}\big)}{1+\frac{2K}{1-r}},
        \end{equation}
        for all $u\in\pos$ with $\vert u\vert_{\infty} \leq t_0(r,K)$ the following quantitative bounds hold:
        \begin{equation}\label{th1eq7}
            \L(u) \preceq \L(\vert u\vert_{\infty}\ones) \preceq \vert u\vert_{\infty} \big( 1+ \frac{2K}{1-r}\big) \ones \preceq  \log\big( \frac{1+r}{2r}\big) \ones ,
        \end{equation}
        \begin{equation}\label{th1eq8}
            \L(u) \preceq  \Big( \Id - \frac{1+r}{2r} \H\Big)^{-1} u.
        \end{equation}
    \end{enumerate}
\end{theorem}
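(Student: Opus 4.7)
My plan is to work at the level of finite generations to sidestep any difficulty coming from possibly infinite exponential moments. Conditioning on the offspring of the root of $\T^m$ and using the Laplace transform of a Poisson variable ($\E{z^N}=e^{\lambda(z-1)}$), the decomposition into the root plus the independent subtrees rooted at its children gives, for the truncated exponential moments $\L^{(n)}(u) := \bigl(\log \E{e^{u\cdot \card_\M(\T^m_{\leq n})}}\bigr)_{m\in\M}$, the recursion
\begin{equation*}
\L^{(n+1)}(u) = u + \H(e^{\L^{(n)}(u)}-\ones),\qquad \L^{(0)}(u) = u.
\end{equation*}
Each $\L^{(n)}(u)\in\pos$ is finite, and the monotone convergence $\card_\M(\T^m_{\leq n}) \nearrow \card_\M(\T^m)$ gives componentwise $\L^{(n)}(u) \nearrow \L(u) \in [0,\infty]^\M$.

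\textbf{Parts (1) and (2).} If $\vert\L(u)\vert_\infty<\infty$, dominated convergence lets me pass the limit through the exponential in the recursion, so $\L(u)$ itself satisfies \eqref{th1eq3}. Conversely, given any $x\in\pos$ with $x = u+\H(e^x-\ones)$, the bound $\L^{(n)}(u) \preceq x$ follows by induction on $n$: the base case is $u\preceq x$ (since $\H(e^x-\ones)\succeq 0$), and the step uses the componentwise monotonicity in $\preceq$ of $y\mapsto u+\H(e^y-\ones)$. Taking $n\to\infty$ simultaneously gives $\vert\L(u)\vert_\infty<\infty$ and the minimality of $\L(u)$ among nonnegative solutions of \eqref{th1eq3}.

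\textbf{Parts (3) and (4).} The convexity bound in \eqref{th1eq4} is a coordinatewise application of Hölder to the Laplace transform $u\mapsto \E{e^{u\cdot \card_\M(\T^m)}}$. The superadditivity I prove by induction at the finite-generation level using the elementary inequality $e^a+e^b-1\leq e^{a+b}$ (for $a,b\geq 0$): if $\L^{(n)}(u)+\L^{(n)}(v) \preceq \L^{(n)}(u+v)$, then
\begin{equation*}
\L^{(n+1)}(u)+\L^{(n+1)}(v) = (u+v) + \H\bigl(e^{\L^{(n)}(u)} + e^{\L^{(n)}(v)} - 2\ones\bigr) \preceq \L^{(n+1)}(u+v).
\end{equation*}
For \eqref{th1eq5}, substituting the componentwise bound $e^x-\ones \preceq \diag(e^x)\,x$ (i.e.\ $e^a-1\leq ae^a$ for $a\geq 0$) into \eqref{th1eq3} gives $\L(u) \preceq u + \H\diag(e^{\L(u)})\L(u)$; iterating this inequality (equivalently, applying the Neumann series $\sum_k (\H\diag(e^{\L(u)}))^k$, which converges with nonnegative entries under the spectral-radius hypothesis) yields \eqref{th1eq5}.

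\textbf{Part (5) and main obstacle.} Set $c=\log\frac{1+r}{2r}$ and compare $\L^{(n)}$ with the linear iteration $a_0=u$, $a_{n+1}=u+\frac{1+r}{2r}\H a_n$, whose closed form $a_n = \sum_{k=0}^n(\frac{1+r}{2r})^k\H^k u$ satisfies, using $\H\in\Ge{r}{K}$,
\begin{equation*}
\vert a_n\vert_\infty \leq \vert u\vert_\infty\Bigl(1+K\sum_{k=1}^\infty\bigl(\tfrac{1+r}{2}\bigr)^k\Bigr) = \vert u\vert_\infty\Bigl(1+K\tfrac{1+r}{1-r}\Bigr) \leq \vert u\vert_\infty\Bigl(1+\tfrac{2K}{1-r}\Bigr).
\end{equation*}
Under the hypothesis $\vert u\vert_\infty \leq t_0(r,K)$ this bound is $\leq c$, and then the linearization $e^a-1 \leq ae^a \leq \frac{1+r}{2r}\,a$ for $a\in[0,c]$ (valid because $e^c=\frac{1+r}{2r}$), combined with nonnegativity of $\H$, propagates $\L^{(n)}(u)\preceq a_n \preceq c\ones$ by induction. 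Passing $n\to\infty$ gives \eqref{th1eq8}, and \eqref{th1eq7} follows by combining this with the monotonicity $\L(u) \preceq \L(\vert u\vert_\infty\ones)$ (itself a consequence of the monotone dependence of the recursion on $u$). The delicate point is exactly this calibration: the choice $c=\log\frac{1+r}{2r}$ is what makes the slope $e^c=\frac{1+r}{2r}$ appear in the linearization, and the threshold $t_0(r,K)$ must be tuned to the Neumann-series bound so that the invariant $a_n\preceq c\ones$ is preserved at every step.
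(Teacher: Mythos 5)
Your proof is correct and follows the same fixed-point strategy as the paper: derive the recursion $\L^{(n+1)}(u) = u + \H\big(e^{\L^{(n)}(u)}-\ones\big)$ from the branching property and monotone convergence, characterize $\L(u)$ as the smallest nonnegative fixed point of $\psi_u(x)=u+\H(e^x-\ones)$, use H\"older for convexity, and linearize to obtain the Neumann-series bounds. The one place where you take a genuinely different (and more self-contained) route is part (5): the paper invokes Lemma~3.2 of \cite{Leb24} for the a priori bound $\L(t\ones)\preceq t\big(1+\tfrac{2K}{1-r}\big)\ones$ and then applies point (4), whereas you reconstruct this bound from scratch by dominating $\L^{(n)}(u)$ with the explicit linear iteration $a_{n+1}=u+\tfrac{1+r}{2r}\H a_n$ and propagating the invariant $a_n\preceq\log\tfrac{1+r}{2r}\,\ones$ via the calibration $e^c=\tfrac{1+r}{2r}$; this yields \eqref{th1eq7} and \eqref{th1eq8} in a single pass without citing external work. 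A minor stylistic difference in part (4): you substitute $e^x-\ones\preceq\diag(e^x)x$ directly into the fixed-point equation and iterate, which implicitly needs $\L(u)$ finite both to invoke \eqref{th1eq3} and to kill the remainder $\big(\H\diag(e^{\L(u)})\big)^n\L(u)$ (the hypothesis $\spr\big(\H\diag(e^{\L(u)})\big)<1$ effectively forces finiteness, so this is benign); the paper instead telescopes $\psi_u^n(0)=\sum_{k<n}\big(\psi_u^{k+1}(0)-\psi_u^k(0)\big)$ and bounds each increment by $\big[\H\diag(e^{\L(u)})\big]^k u$, which works with the always-finite iterates and degrades gracefully to the inequality $\L(u)\preceq\sum_n\big[\H\diag(e^{\L(u)})\big]^n u$ in extended arithmetic even when the series diverges. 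Both routes are sound.
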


\begin{remark}
    All information about the exponential moments is encoded in the fixed-point equation \eqref{th1eq3} and that $\L(u)$ is the smallest nonnegative solution. Equation \eqref{th1eq3} is well-known, see for example \cite{wang_total_2014, karim_compound_2025}. Points $(4)$ and $(5)$ provide quantitative type-specific bounds for type-weighted total progeny of a Poisson BGW trees. Finiteness condition, point $(1)$, and characterization as the smallest nonnegative solution, point $(2)$, follow from our proof strategy based on monotonicity, which differ from the strategy used in \cite{karim_compound_2025}. These monotonicity arguments are available because we restrict attention to nonnegative weights, as opposed to Proposition 1 of \cite{karim_compound_2025}.
\end{remark}

\begin{remark}\label{remark3.3}
	As explained in Theorem \ref{thm exact multD} of Section \ref{section5}, if $\vert \L(u)\vert_{\infty}<\infty$ and $\spr(\H)<1$, then $\spr\big(\H\diag(e^{\L(u)})\big)<1$ unless $u$ is a boundary point of the domain of $\L$, in the sense that $\vert \L(tu)\vert_{\infty}=\infty$ for all $t>1$. At the boundary, one has $\spr\big(\H\diag(e^{\L(u)})\big)=1$.
\end{remark}

\begin{remark}
	In \eqref{th1eq8}, since $\H\in\Ge{r}{K}$, we have $\spr(\H)\leq r$ and thus for $\vert u\vert_{\infty}\leq t_0(r,K)$, it follows that $\spr(\frac{1+r}{2r} H) \leq \frac{1+r}{2} <1$, which guarantees that the inverse in \eqref{th1eq8} is well-defined.
\end{remark}

\begin{remark}
	Choosing $u = t\e_{m_0}$ with $t\leq t_0(r,K)$, equation \eqref{th1eq8} shows that $\L(t\e_{m_0})$ is bounded by the $m_0$-th column of $t\big(\Id-\frac{1+r}{2r}\H\big)^{-1}$. Consequently, the exponential moment of the number of individuals of type $m_0$ in a $\Pois(\H)$-BGW tree with root of type $m$ is controlled by the $(m,m_0)$-th entry of this matrix. By contrast, the global estimate obtained by taking $u=t\ones$, which is the framework of Lemma 3.2 \cite{leblanc_exponential_2024}, involves the full row sum of $t\big(\Id-\frac{1+r}{2r}\H\big)^{-1}$, which may be substantially larger. More generally, for $S\subset \M$, if $\ones_{S} = (\ind_{m\in S})_{m\in\M}$, $\H\in\Ge{r}{K}$ with $r<1$, and if $t\leq t_0(r,K)$, we have,
	\begin{equation}
		\E{e^{t\ones_{S}\trp \card_{\M}(\T^m)}} \leq \exp\Big[t\sum_{m'\in S} \big(\Id-\frac{1+r}{2r}\H\big)^{-1}_{m,m'}\Big], \quad m\in\M. 
	\end{equation}
\end{remark}

A natural question is whether the condition $\spr(\H)<1$ is necessary for the existence of a nonzero vector $u\in\pos$ such that $\vert\L(u)\vert_{\infty}<\infty$. We refer the reader to Section \ref{section5} for a more detailed discussion. In general, the answer is negative, as simple counterexamples can be constructed. Indeed, consider
\[\H= \begin{pmatrix} \alpha & 0 \\ 0 & 0 \end{pmatrix},\]
with $\alpha\geq 1$. Then $\spr(\H)=\alpha\geq 1$, yet for $u=\e_2$ we have $\L(\e_2)=\e_2$, which is finite. On the other hand, $\L(\e_1)=(\infty,0)$, so the first coordinate is infinite. Thus, the condition $\spr(\H)<1$ is not necessary for the existence of a nontrivial direction $u$ for which $\L(u)$ is finite.

The above example relies on the fact that $\H$ is reducible. To rule out this phenomenon, one needs an additional structural assumption on $\H$, such as irreducibility. Recall that a square matrix $A = (A_{ij})_{i,j\in \M}$ with nonnegative entries is irreducible if the graph with vertices $\M$ and containing directed edges $(i \rightarrow j)$ whenever $A_{ij} > 0$ is strongly connected. Equivalently, $A$ is irreducible if the following property holds,
\[\exists N\geq1, \ (\Id+A)^N \succ 0.\]
The following lemma provides a useful lower bound on $\L$ and yields our first results concerning its domain.

\begin{lemma}\label{lemma3.5}
	Let $u\in\pos$, and $\T^m$ be a $\Pois(\H)$-BGW tree. Then we have,
	\begin{equation}\label{lemma3.5eq1}
		\Big( \E{u\trp \card_{\M}(\T^m)} \Big)_{m\in\M} = \sum_{n\geq 0} \H^n u \preceq e^{\L(u)} - \ones.
	\end{equation}
	In particular the following holds.
	\begin{enumerate}
		\item If $\H$ is irreducible and $\spr(\H) \geq 1$. Then for any nonzero $u\in\pos$, we have
	\[ \forall m\in\M, \ \L(u)_m = \infty.\]
		\item If there exists $u\succ 0$ such that $\vert \L(u) \vert_{\infty}<\infty $ then we have $\spr(\H)<1$.
	\end{enumerate}
\end{lemma}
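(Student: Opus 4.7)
The plan is to prove the two-part display \eqref{lemma3.5eq1} first and then deduce (1) and (2) as short consequences. The equality $\big(\E{u \cdot \card_\M(\T^m)}\big)_{m \in \M} = \sum_{n \geq 0} \H^n u$ follows from the classical mean formula for multitype Galton-Watson trees: an induction on the generation $n$, using the branching property together with the mean of the Poisson offspring distribution, shows that the expected number of type-$j$ individuals at generation $n$ in $\T^m$ is $(\H^n)_{m,j}$; summing over $n$ and contracting with $u$ gives the identity. For the coordinate-wise bound on $\L(u)$, I would apply Jensen's inequality in the form $e^x \geq 1+x$, giving $e^{\L(u)_m} = \E{e^{u \cdot \card_\M(\T^m)}} \geq 1 + \E{u \cdot \card_\M(\T^m)}$, which rearranges to the inequality appearing in \eqref{lemma3.5eq1}.

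For part (1), fix $u \in \pos$ with $u \neq 0$ and assume $\H$ is irreducible with $\spr(\H) \geq 1$. By Perron-Frobenius applied to the nonnegative irreducible matrix $\H$ there is a strictly positive left eigenvector $w \succ 0$ with $w^\intercal \H = \spr(\H)\, w^\intercal$. Pairing with $\sum_{n \geq 0} \H^n u$ and iterating the eigenvector relation gives $w \cdot \sum_{n \geq 0} \H^n u = (w \cdot u)\sum_{n \geq 0} \spr(\H)^n$; since $w \cdot u > 0$ (as $w \succ 0$ and $u \neq 0$) and $\spr(\H) \geq 1$, the geometric series diverges, and at least one coordinate of $\sum_n \H^n u$ is infinite, say the $m_0$-th. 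To propagate divergence to every coordinate I would use irreducibility: for any $m \in \M$, choose $N$ with $(\H^N)_{m, m_0} > 0$ and bound $\big(\sum_{n \geq N} \H^n u\big)_m \geq (\H^N)_{m, m_0} \big(\sum_{n \geq 0} \H^n u\big)_{m_0} = \infty$. Combined with \eqref{lemma3.5eq1} this forces $\L(u)_m = \infty$ for every $m$. This propagation is the step I expect to require the most care, since it is precisely where irreducibility of $\H$ genuinely enters and one has to transport divergence across the interaction graph with the correct direction of the coordinate-wise inequality.

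For part (2), suppose $u \succ 0$ and $\vert \L(u) \vert_\infty < \infty$. By \eqref{lemma3.5eq1} the vector $\sum_n \H^n u$ has finite coordinates, and strict positivity of $u$ gives $u \succeq c \ones$ for some $c > 0$, so that $\sum_n \H^n \ones$ also has finite coordinates. For a nonnegative matrix one has $\Vvert \H^n \Vvert_\infty = \vert \H^n \ones \vert_\infty$ (the maximum row sum), so $\Vvert \H^n \Vvert_\infty$ is summable and in particular tends to $0$; Gelfand's formula $\spr(\H) = \lim_n \Vvert \H^n \Vvert_\infty^{1/n}$ then forces $\spr(\H) < 1$.
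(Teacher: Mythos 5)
Your equality proof and the outline of parts (1) and (2) are correct, and both parts take a genuinely different route from the paper, but there is one real issue in your handling of the displayed inequality.

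\textbf{The displayed inequality.} Your Jensen step gives $e^{\L(u)_m} = \E{e^{u\cdot\card_\M(\T^m)}} \geq 1 + \E{u\cdot\card_\M(\T^m)}$, i.e.\ $\sum_{n\geq 0}\H^n u \preceq e^{\L(u)}-\ones$. You then assert that this ``rearranges'' to the inequality in \eqref{lemma3.5eq1}, namely $\sum_{n\geq 0}\H^n u \preceq \L(u)-\ones$, but it does not: since $e^y-1\geq y$ for $y\geq 0$, the bound $\L(u)-\ones\succeq\cdots$ is strictly stronger than what Jensen gives, and is in fact false (take $u=0$, where it would read $0\preceq -\ones$; or take $M=1$, $\H=\alpha\in(0,1)$ fixed and $u\to 0^+$, where $\L(u)-1\to -1$ while $\sum\alpha^n u\to 0$). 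This is almost certainly a typo in the paper's statement --- the intended right-hand side is $e^{\L(u)}-\ones$, which matches the paper's own remark that the lower bound ``is just convexity of the exponential'' and which is exactly what your Jensen step yields, and which also suffices for parts (1) and (2). But as written your ``rearranges to'' is a logical gap: you should have flagged the discrepancy rather than claimed to reach the stated form.

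\textbf{Part (1).} Your argument is correct but different from the paper's. You use a strictly positive \emph{left} Perron eigenvector to show divergence of $w\cdot\sum_n\H^n u$, deduce divergence of at least one coordinate $m_0$, and then propagate to every coordinate via $(\H^N)_{m,m_0}>0$. The paper instead uses irreducibility to get $\sum_{k<N}\H^k u\succ 0$, feeds this through the Ces\`aro-type bound $\sum_{k\geq 0}\H^k \succeq \tfrac1N\bigl(\sum_{k\geq 0}\H^k\bigr)\bigl(\sum_{k<N}\H^k\bigr)$, and then applies a \emph{right} Perron eigenvector $\kappa\succ 0$ with $\ones\succeq\kappa$ to blow up all coordinates at once. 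Both are valid; yours makes the divergence-propagation mechanism more explicit, while the paper's avoids the two-stage structure.

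\textbf{Part (2).} Your argument is correct and arguably more elementary than the paper's: you avoid Perron--Frobenius entirely, using instead that $\Vvert\H^n\Vvert_\infty=\vert\H^n\ones\vert_\infty$ is the maximum row sum, that summability of $\sum_n\H^n\ones$ forces $\Vvert\H^n\Vvert_\infty<1$ for some $n$, and then Gelfand. The paper instead invokes the weak Perron--Frobenius theorem and a geometric-series argument along the eigenvector direction. One small point of precision in your write-up: ``tends to $0$'' plus the limiting form of Gelfand alone only gives $\spr(\H)\leq 1$; what closes the gap is the submultiplicative form $\spr(\H)=\inf_n\Vvert\H^n\Vvert_\infty^{1/n}$ (equivalently $\Vvert\H^n\Vvert_\infty\geq\spr(\H)^n$), after which a single $N$ with $\Vvert\H^N\Vvert_\infty<1$ gives $\spr(\H)<1$. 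You should state this step explicitly rather than writing that Gelfand ``forces'' the strict inequality.
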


Equality \eqref{lemma3.5eq1} is well-known and not specific to the Poisson framework, $(1)$ and $(2)$ are also well-known, standard textbook results, we nevertheless provide the proofs for completeness. Irreducibility conditions are classical when one is interested in extinction probabilities, see for example \cite{harris_theory_1963}, and are thus linked to condition for finiteness of the Laplace transform.

\begin{proof}[Proof of Lemma \ref{lemma3.5}]
Clearly, we have
\[ \E{u\trp\card_{\M}(\T^m_0)} = u\trp \e_m = (u)_m.\]
Then, for $n=1$ we have
\[ \E{u\trp\card_{\M}(\T^m_1)} = \sum_{m'}H_{m,m'} u_{m'} = (\H u)_m.\]
By conditioning on the previous generation, a classical induction shows that
\[\E{u\trp\card_{\M}(\T^m_n)} =(\H^n u)_m, \quad n\geq 0.\]
Thus we have $\E{u\trp\card_{\M}(\T^m)} = \big(\sum_{n\geq 0} \H^n u\big)_m$ which is the result. From $e^x \geq 1+x$ the lower bound on $e^{\L(u)}$ is also clear. Let us now assume that $\spr(\H)\geq 1$ and that $\H$ is irreducible. Let $u\succeq 0$ and $u\neq 0$. By irreducibility, there exists $N\in\N^*$ such that $\sum_{k=0}^{N-1} \H^k \succ 0$. Thus, it is clear that $\frac{1}{N}\sum_{k=0}^{N-1} \H^ku \succeq \varepsilon\ones \succ 0$ for some $\varepsilon > 0$. It follows that
\begin{equation*}
	e^{\L(u)}  \succeq \ones + \Big(\sum_{k=0}^{\infty} \H^k\Big)u \succeq  \Big(\sum_{k=0}^{\infty} \H^k\Big) \Big(\frac{1}{N}\sum_{k=0}^{N-1} \H^ku\Big) \succeq \varepsilon \Big(\sum_{k=0}^{\infty} \H^k\Big) \ones.
\end{equation*}
By the Perron-Frobenius Theorem (see Appendix \ref{appendixA.1}), $\rho = \spr(\H)\geq 1$ is an eigenvalue of $\H$ with an eigenvector $\kappa \succ 0$. Since we may assume $\ones \succeq \kappa$, we have
\begin{equation*}
	e^{\L(u)}  \succeq \ones + \varepsilon \sum_{k=0}^{\infty} \rho^k \kappa  = \infty\cdot \ones,
\end{equation*} 
which concludes point $(1)$ by taking the logarithm. Suppose now that there exists $u\succ 0$ such that $\vert \L(u)\vert_{\infty}<\infty$. Let $\kappa\succeq0$ be an eigenvector associated with $\rho=\spr(\H)$, whose existence follows from the weak Perron-Frobenius Theorem (see Appendix \ref{appendixA.1}). For $\varepsilon>0$ small enough we have $u\succeq \varepsilon \kappa$ and thus $\L(\varepsilon \kappa)$ is finite. Considering the bound
\[e^{\L(\varepsilon \kappa)} \succeq \sum_{n\geq 0} \H^n \varepsilon\kappa = \varepsilon \Big(\sum_{n\geq 0} \rho^n\Big) \kappa,\]
we must have $\rho<1$ since $\kappa \neq 0$, which is point $(2)$.
\end{proof}

Lemma \ref{lemma3.5} establishes early properties of links between $\H$ and the domain where $\L$ is finite. In the single-type case we can find explicitly the largest $u\in\R_+$ such that the Laplace transform is finite. This well-known result, see for example \cite{bordenave_large_2007,bondesson_class_2004}, is stated below in Theorem \ref{theorem3.6}.

\begin{theorem}\label{theorem3.6}
	Let $\T$ be a single-type $\Pois(\alpha)$-BGW tree with $0<\alpha<1$. Let 
	\begin{equation}
		u_c = \log\big(1/\alpha\big) -(1-\alpha).
	\end{equation}
	Then, for $u\geq 0$ we have
	\begin{equation}
		\E{e^{u \card(\T)}} < \infty \iff u \leq u_c,
	\end{equation}
	and we have
	\begin{equation}
		\E{e^{u_c \card(\T)}} = \frac{1}{\alpha}.
	\end{equation}
	Finally, for any $x \geq 0$ such that $u := x - \alpha (e^x-1) \geq 0$ the following holds,
	\begin{equation}\label{th3.6eq1}
		x = \L(u) \iff \alpha e^x \leq 1.
	\end{equation}
\end{theorem}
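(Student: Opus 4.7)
The plan is to reduce the whole statement to a one-variable analysis of the scalar version of the fixed point equation \eqref{th1eq3} of Theorem \ref{theorem1}. Specialised to $M=1$ with offspring mean $\alpha$, Theorem \ref{theorem1}(1)--(2) says that $\E{e^{u\card(\T)}}=e^{\L(u)}$ is finite if and only if the equation $x=u+\alpha(e^x-1)$ admits at least one solution $x\geq 0$, in which case $\L(u)$ is the smallest such solution. I would therefore introduce
\begin{equation*}
\phi(x) := x - \alpha(e^x-1), \qquad x\geq 0,
\end{equation*}
so that $\L(u)$ is characterised as the smallest $x\geq 0$ with $\phi(x)=u$, and then analyse $\phi$ by elementary calculus.

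Since $\phi(0)=0$ and $\phi'(x)=1-\alpha e^x$ vanishes uniquely at $x^\ast := \log(1/\alpha) > 0$ (using $0<\alpha<1$), with $\phi'>0$ on $[0,x^\ast)$ and $\phi'<0$ on $(x^\ast,\infty)$, and since $\phi(x)\to-\infty$ as $x\to\infty$, the maximum of $\phi$ on $\R_+$ is
\begin{equation*}
\phi(x^\ast) = \log(1/\alpha) - \alpha\bigl(1/\alpha -1\bigr) = \log(1/\alpha) - (1-\alpha) = u_c.
\end{equation*}
Thus the restricted map $\phi\big|_{[0,x^\ast]}$ is a strictly increasing bijection onto $[0,u_c]$, and on $[x^\ast,\infty)$ the function $\phi$ strictly decreases from $u_c$ to $-\infty$, crossing the set $[0,u_c]$ exactly on some interval $[x^\ast,x_0]$ with $x_0>x^\ast$.

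From this picture the three assertions follow directly. For $u>u_c$ the equation $\phi(x)=u$ has no nonnegative solution, so by Theorem \ref{theorem1}(1) one has $\E{e^{u\card(\T)}}=\infty$; for $u\in[0,u_c]$ the increasing branch supplies the smallest nonnegative preimage of $u$, which lies in $[0,x^\ast]$ and therefore satisfies $\alpha e^x\leq 1$. Specialising to $u=u_c$ gives $\L(u_c)=x^\ast=\log(1/\alpha)$, hence $\E{e^{u_c\card(\T)}}=e^{\log(1/\alpha)}=1/\alpha$. For the equivalence \eqref{th3.6eq1}, fix $x\geq 0$ with $u:=\phi(x)\geq 0$: if $\alpha e^x\leq 1$ then $x\in[0,x^\ast]$, and strict monotonicity of $\phi$ on this interval makes $x$ the smallest preimage of $u$, so $x=\L(u)$; if instead $\alpha e^x>1$ then $x>x^\ast$, while the increasing branch delivers a strictly smaller preimage $x_1\in[0,x^\ast]$ of $u$, so $\L(u)\leq x_1<x$. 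The argument is essentially routine calculus once Theorem \ref{theorem1} has been invoked; the only genuinely subtle point, and precisely what the equivalence \eqref{th3.6eq1} isolates, is that the characterisation of $\L(u)$ as the \emph{smallest} nonnegative fixed point is what forces the left branch $\{\alpha e^x\leq 1\}$ to be the correct one.
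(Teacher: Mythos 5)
Your proposal is correct and follows essentially the same route as the paper: both reduce to elementary calculus on an auxiliary one-variable function after invoking the fixed-point characterisation of $\L(u)$ from Theorem \ref{theorem1}. The only difference is a sign convention — you work with $\phi(x)=x-\alpha(e^x-1)$ and its level sets $\phi(x)=u$, whereas the paper works with $\phi_u(x)=u+\alpha(e^x-1)-x=u-\phi(x)$ and its zeros — and both arguments pivot on the unique critical point at $\log(1/\alpha)$, with your "smallest preimage lies on the increasing branch" observation being the exact counterpart of the paper's "by convexity $\phi_u'(\L(u))\leq 0$."
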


This result is of interest since it is not clear a priori that the exponential moment at $u_c = \sup\{ u\geq 0 \mid \L(u)<\infty\}$ is finite. Also, by \eqref{th3.6eq1}, for any $0\leq x \leq \log(1/\alpha)$ we have $\L\big(x-\alpha(e^x-1)\big) = x$, which, up to an inverse problem, completely solves the question of the exponential moments in the one dimensional case. By mimicking the single-type case, in Section \ref{section5}, we derive the multitype extension of Theorem \ref{theorem3.6} and perform a complete study of the topological properties of the domain where $\L$ is finite. Thus the proof of Theorem \ref{theorem3.6} is of interest since it may give intuition in the multitype case and is given below.

\begin{proof}[Proof of Theorem \ref{theorem3.6}]
For $u\geq 0$ consider the real function $\phi_u(x) = u + \alpha(e^x-1) - x$. We want to find for which $u$ this function has nonnegative zeros. $\phi_u$ is strictly convex, $\phi_u(0)=u\geq 0$, $\phi_u'(0) < 0$ and $\lim_{x\to\infty} \phi_u(x) = \infty$ thus if $\phi_u$ has zeros, they must be nonnegative. A quick study of $\phi_u$ shows that its minimum is reached at $x = \log(1/\alpha)$. Thus, if $\min \phi_u  > 0$ there are no zeros, if $\min \phi_u  = 0$ there is one zero and finally if $\min \phi_u  < 0$ there are two zeros. Since for a fixed $x$ the quantity $\phi_u(x)$ is increasing with $u$, it is clear that the largest $u$ such that there exists at least one zero is the one such that \[0 = \min \phi_u = \phi_u(\log(1/\alpha)) =  u + (1-\alpha) - \log(1/\alpha).\]
	Which gives $u = u_c = \log(1/\alpha) - (1-\alpha) > 0$ and $x_c = \L(u_c) = \log(1/\alpha)$.\\
	The last statement comes from the fact that $\L(u)$ is the smallest nonnegative zero. Indeed if $0$ is a global minimum of $\phi_u$ (ie $u=u_c$) then there is a unique zero, $\L(u)$, and necessarily we have $\phi_u'(\L(u)) = 0$. If there are two distinct zeros $x_1$ and $x_2$, by strict convexity we have $\phi_u'(x_1)<0$ and $\phi_u'(x_2) > 0$. Since $\L(u) = x_1$ we have the result.
\end{proof}

\subsection[Proof of Theorem 1]{Proof of Theorem \ref{theorem1}}\label{section3.2}

The proof of Theorem \ref{theorem1} is based on several intermediate results. We begin with the following definition.

\begin{definition}\label{definition2}
    Let $u\in\pos$ and define $\psi_{u} : x\in\R^M \mapsto u + \H(e^x-\ones)$. Then we define the increasing limit
    \[\L(u) = \lim\limits_{n\to\infty} \psi_u^n(0)\]
    where $\psi^n_u$ is the $n$-fold composition $\psi_u  \circ \cdots \circ \psi_{u}$.
\end{definition}

Let us check that $(\psi_u^n(0))_n$ is increasing. Indeed, $\psi_u$ is increasing with respect to $\preceq$, and since $0\preceq \psi_u(0)=u$, by induction we have $\psi^{n}_u(0) \preceq \psi^{n+1}_u(0)$ for any $n\in\N$. The connection between this definition of $\L(u)$ and the exponential moments will be established later. For the moment, we work only with this definition and prove the different results on $\L(u)$. 

\begin{proposition}\label{proptemp1}
    Suppose that there exists a finite vector $x\succeq0$ such that $x = \psi_{u}(x)$. Then $\L(u)$ is finite and it is the smallest solution (for $\preceq$) of the fixed-point equation $y = \psi_{u}(y)$ among the solutions with nonnegative coordinates. Conversely, if $\vert \L(u)\vert_{\infty} < \infty$ then $\L(u)$ is solution of $y = u+\H(e^y-\ones)$.\\
Moreover, for any $0\preceq u\preceq v$, we have $\L(u) \preceq \L(v)$, and for any $u,v\succeq 0$ we have $\L(u)+\L(v) \preceq \L(u+v)$.
\end{proposition}

\begin{proof}[Proof of Proposition \ref{proptemp1}]
    Since $\L(u)$ is an increasing limit and since $\psi^0_u(0) = 0$ we always have $\L(u)\succeq 0$. Let $x$ be a finite nonnegative fixed point of $\psi_u$. Since $x \succeq 0$, applying $\psi_{u}$ on both sides gives $x \succeq \psi_{u}(0)$. By induction, $x \succeq \psi_{u}^n(0)$ for all $n\geq 0$ and thus at the limit, $x \succeq \L(u)$ which proves that $\L(u)$ is finite, nonnegative, and smaller than any other nonnegative solution. Continuity of $\psi_u$ also implies that $\L(u)$ is a solution of the fixed-point equation.\\
    Reciprocally, if $\vert \L(u)\vert_{\infty} < \infty$ it is clear by continuity of $\psi_{u}$ that $\L(u)$ is a nonnegative fixed-point.
    
    For the monotonicity properties, since $\psi_{u}$ and $\psi_{v}$ are increasing functions with respect to $\preceq$ and since $\psi_{u} \preceq \psi_{v}$ for $u\preceq v$ we have by induction,
    \[ \psi^n_{u}(0) \preceq \psi^n_{v}(0), \quad n\in\N\]
    which leads to $\L(u) \preceq \L(v)$.
    Similarly, since $e^{x+y}-1 \geq e^x-1 + e^y-1$ for any $x,y\geq 0$ it is clear that 
    \[\psi_{u+v}(x+y) \succeq \psi_u(x) + \psi_v(y)\]
    for any $x,y\succeq 0$. Thus an induction leads to $\psi_{u+v}^n(0) \succeq \psi_u^n(0) + \psi_v^n(0)$ which concludes on $\L(u)+\L(v) \preceq \L(u+v)$.
\end{proof}

Definition \ref{definition2} and Proposition \ref{proptemp1} immediately imply that either there exists a finite vector $x\succeq0$ satisfying $x=u+\H(e^x-\ones)$, or $\vert\L(u)\vert_\infty=\infty$. Thus the following holds
\begin{equation}
        \vert \L(u)\vert_{\infty} < \infty \ \Longleftrightarrow \ \exists x\in\pos, \ x = u+\H(e^x-\ones),
\end{equation}
which is point $(1)$ of Theorem \ref{theorem1}. It is also clear from Proposition \ref{proptemp1} that point $(2)$ and the first part of point $(3)$ of Theorem \ref{theorem1} hold. The second part of point $(3)$ is postponed at the end of the proof.

Lemma 3.2 of \cite{leblanc_exponential_2024} proves exponential moment for $\Pois(\H)$-BGW trees uniformly over all types. The cornerstone of the proof of this lemma is the limit behaviour of the recursive sequence $(\psi^n_{t\ones}(0))_{n\geq 0}$ with $t\geq 0$. The proof yields the following result.

\begin{lemma}\label{lemma1}
    Suppose that $\H\in\Ge{r}{K}$ with $r<1$. Then for all $0\leq t\leq t_0(r,K)$ we have
    \begin{equation}
    	\L(t\ones) = \lim\limits_{n\to\infty} \psi^n_{t\ones}(0) \preceq t \big( 1+ \frac{2K}{1-r} \big) \ones,
    \end{equation} 
    with $t_0(r,K)$ defined in Theorem \ref{theorem1}.
\end{lemma}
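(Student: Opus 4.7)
The plan is to analyze the recursion $a_{n+1} := \psi_{t\ones}(a_n)$ with $a_0 = 0$ by a strong induction showing that the iterates stay componentwise bounded by $T\ones$, where $T := t(1+2K/(1-r))$, and then to pass to the monotone limit. As a preliminary observation, since $\H$ has non-negative entries, $\psi_{t\ones}$ is componentwise non-decreasing on $\pos$; combined with $a_0 = 0 \preceq t\ones = a_1$, this makes $(a_n)$ non-decreasing. Hence once we know $(a_n)$ stays bounded above by $T\ones$, the limit $\L(t\ones) = \lim_n a_n$ exists and satisfies the required inequality.

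For the core inductive step, suppose $a_k \preceq T\ones$ for every $k \leq n$. The elementary inequality $e^x - 1 \leq e^T x$ for $x \in [0,T]$ (immediate from $(e^T x - (e^x-1))' = e^T - e^x \geq 0$ on $[0,T]$) applies componentwise, yielding $e^{a_k} - \ones \preceq e^T a_k$ and therefore
\begin{equation*}
a_{k+1} \preceq t\ones + e^T \H\, a_k, \qquad k \leq n.
\end{equation*}
Iterating this linear majorant down to $a_0 = 0$ gives $a_{n+1} \preceq t \sum_{k=0}^n (e^T\H)^k \ones$. Taking $|\cdot|_\infty$ and using $\Vvert \H^k\Vvert_\infty \leq K r^k$ for $k\geq 1$ yields
\begin{equation*}
|a_{n+1}|_\infty \leq t + tK \sum_{k=1}^\infty (r e^T)^k = t + tK \frac{r e^T}{1 - r e^T},
\end{equation*}
provided $re^T < 1$.

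The main obstacle is ensuring the constants close the induction, and this is exactly what the choice of $t_0(r,K)$ is engineered for: $t \leq t_0(r,K)$ means $T \leq \log((1+r)/(2r))$, hence $e^T \leq (1+r)/(2r)$, so $re^T \leq (1+r)/2 < 1$ and $re^T/(1-re^T) \leq (1+r)/(1-r) \leq 2/(1-r)$. Plugging this in,
\begin{equation*}
|a_{n+1}|_\infty \leq t + \frac{2tK}{1-r} = t\Big(1 + \frac{2K}{1-r}\Big) = T,
\end{equation*}
and since $a_{n+1} \succeq 0$ this is exactly $a_{n+1} \preceq T\ones$, closing the induction. Combined with the monotonicity of $(a_n)$, passing to the limit gives $\L(t\ones) \preceq t(1 + 2K/(1-r))\ones$, as claimed.
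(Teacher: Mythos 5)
Your proof is correct and self-contained. A brief note on context: the paper does not actually prove this lemma in-text; it simply cites Lemma 3.2 of \cite{Leb24} and states that the proof there proceeds by analyzing the limit behaviour of the recursive sequence $(\psi^n_{t\ones}(0))_{n\geq 0}$. Your argument gives a clean, explicit version of exactly such an analysis: you bootstrap a uniform bound $a_n \preceq T\ones$ by linearizing the exponential via $e^x-1 \leq e^T x$ on $[0,T]$, which turns the nonlinear recursion into a geometric majorant $t\sum_{k=0}^n (e^T\H)^k\ones$, and the very definition of $t_0(r,K)$ ensures $re^T \leq (1+r)/2 < 1$ so that the geometric sum closes the induction at the right constant. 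The strong induction is legitimate (you do use $a_k\preceq T\ones$ for all $k\leq n$, not just $k=n$, when unrolling the majorant), the nonnegativity of the iterates needed to apply the pointwise inequality is guaranteed since $\psi_{t\ones}$ maps $\pos$ into $\pos$, and the monotone convergence to $\L(t\ones)$ is correctly invoked. This is a valid substitute for the cited external proof, and given the paper's description it is essentially in the same spirit.
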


From this result we can easily derive the rest of the proof. Indeed, by Proposition \ref{proptemp1}, Lemma \ref{lemma1}, for $u\in\pos$ with $\vert u\vert_{\infty} \leq t_0(r,K)$ we have,
\begin{equation}\label{demth1eq16}
    \L(u) \preceq \L(\vert u\vert_{\infty}\ones) \preceq \vert u\vert_{\infty} \big( 1+ \frac{2K}{1-r} \big) \ones \preceq \log\big(\frac{1+r}{2r}\big) \ones
\end{equation}
which is equation \eqref{th1eq7}. Let us continue with the following.

\begin{proposition}\label{proposition2}
    Let $u\in\pos$. Then we have (where both sides may possibly be infinite on some entries),
    \begin{equation}
        \L(u) \preceq \sum_{n\geq 0} \big[ \H\diag(e^{\L(u)}) \big]^n u.
    \end{equation}
    Thus if we suppose that $\spr\big(\H\diag(e^{\L(u)})\big)<1$, we have
    \begin{equation}
        \L(u) \preceq \big(\Id - \H \diag(e^{\L(u)}) \big)^{-1} u.
    \end{equation}
    In particular if $\H\in\Ge{r}{K}$ with $r<1$ and if $u\in\pos$ with $ \vert u\vert_{\infty} \leq t_0(r,K)$ then we have
    \begin{equation}
         \L(u)  \preceq  \big(\Id - \frac{1+r}{2r} \H\big)^{-1} u.
    \end{equation}
\end{proposition}

\begin{proof}[Proof of Proposition \ref{proposition2}]
    Let $u\succeq 0$. First, observe that for $n\geq 1$ we can rewrite $\psi_{u}^n(0)$ as follows,
    \begin{equation*}
        \psi_{u}^n(0) = \sum_{k=0}^{n-1} \big(\psi_{u}^{k+1}(0)-\psi_{u}^k(0)\big).
    \end{equation*}
    For $k=0$, we have $\psi_{u}^{k+1}(0)-\psi_{u}^k(0)= u = \big[\H\diag(e^{\L(u)})\big]^0  u$. Using the inequality $e^x-e^y \preceq \diag(e^x)(x-y)$, valid for all $0\preceq y\preceq x$, for $k\geq 1$, we have
    \begin{align*}
        \psi_{u}^{k+1}(0)-\psi_{u}^k(0) & = \H\big( e^{\psi_{u}^k(0)} - e^{\psi_{u}^{k-1}(0)}\big) \\
        & \preceq \H\diag(e^{\psi_u^k(0)}) (\psi_{u}^k(0) - \psi_{u}^{k-1}(0)) \\
        & \preceq  \H \diag(e^{\L(u)}) (\psi_{u}^k(0) - \psi_{u}^{k-1}(0)) \\
        & \preceq \big[ \H\diag(e^{\L(u)}) \big]^k  u
    \end{align*}
    where the last inequality follows by induction on $k$. Thus we have
    \begin{align*}
        \L(u) & \preceq \sum_{k=0}^{\infty} \big[ \H\diag(e^{\L(u)}) \big]^k  u.
    \end{align*}
    Under the assumption $\spr\big(\H\diag(e^{\L(u)})\big)<1$, the inverse is well-defined and we obtain
    \[\L(u) \preceq  \big(\Id - \H\diag(e^{\L(u)})\big)^{-1} u.\]
    For the last part, if $\H\in\Ge{r}{K}$ and $\vert u\vert_{\infty} \leq t_0(r,K)$ then by \eqref{demth1eq16}, $\L(u) \preceq \log\big(\frac{1+r}{2r}\big)\ones$ and so
    \[ \H\diag(e^{\L(u)}) \preceq \frac{1+r}{2r} \H.\]
    Since $\spr\big(\frac{1+r}{2r} \H \big)\leq \frac{1+r}{2}<1$ the second bound is clear.
\end{proof}

We can now finish the proof of Theorem \ref{theorem1}. Equations \eqref{th1eq5} and \eqref{th1eq8} follow directly from Proposition \ref{proposition2}. Let us now link the definition of $\L$ by Definition \ref{definition2} to the one by Theorem \ref{theorem1}.

Let $u\in\pos$ and $n$ a nonnegative integer. The branching property of BGW trees leads to the following equality in distribution;
\begin{equation}
    \card_{\M}(\T^{m_0}_{\leq n+1}) = \e_{m_0} + \sum_{m\in\M} \sum_{k=1}^{X_{m_0,m}} \card_{\M}(\T^{m,k}_{\leq n})
\end{equation}
where $X_{m_0,m}, \ \T^{m,k}_{\leq n}$ for $m\in\M, \ k\in\N$ are independent, $X_{m_0,m} \sim \Pois(H_{m_0,m})$ and $\T^{m,k}_{\leq n} \sim \T^{m}_{\leq n}$.

Thus if we denote $g^n_{m_0}(u) = \log \E{e^{u\trp \card_{\M}(\T^{m_0}_{\leq n})}}$ and $\bm{g}^n(u) = (g^n_m(u))_{m\in\M}$ we have
\begin{align*}
    g^{n+1}_{m_0}(u) & = \log \Big(  \E{e^{u\trp \e_{m_0} + \sum_{m\in\M} \sum_{k=1}^{X_{m_0,m}} u\trp \card_{\M}(\T^{m,k}_{\leq n})}} \Big)\\
    & = u \trp \e_{m_0} + \sum_{m\in\M} \log \Big(  \E{e^{\sum_{k=1}^{X_{m_0,m}} u\trp \card_{\M}(\T^{m,k}_{\leq n})}} \Big) \\
    & = u \trp \e_{m_0} + \sum_{m\in\M} \log \bigg(  \E{\prod_{k=1}^{X_{m_0,m}} \E{e^{u\trp \card_{\M}(\T^{m}_{\leq n})}} }\bigg) \\
    & = u \trp \e_{m_0} + \sum_{m\in\M} \log \Big(  \E{(e^{g^n_m(u)})^{X_{m_0,m}} }\Big) \\
    & = u \trp \e_{m_0} + \sum_{m\in\M}  H_{m_0,m} (e^{g^n_m(u)}-1),
\end{align*}
where we used the moment generating function of the Poisson distribution in the last step. It follows that $\bm{g}^{n+1}(u) = \psi_{u}(\bm{g}^n(u))$. Finally since $\bm{g}^{0}(u) = u = \psi_{u}(0)$, we have $\bm{g}^{n}(u) = \psi_{u}^{n+1}(0)$. Definition \ref{definition2} therefore implies 
\begin{equation*}
    \lim\limits_{n\to\infty} \bm{g}^n(u) = \L(u).
\end{equation*}
Then by the monotone convergence theorem, $e^{g^n_{m_0}(u)} = \E{e^{u \trp \card_{\M}(\T^{m_0}_{\leq n})}} \xrightarrow[n\to\infty]{} \E{e^{u \trp \card_{\M}(\T^{m_0})}}$, which concludes that the definitions of $\L$ by \eqref{th1eq1} and by Definition \ref{definition2} coincide. Now that the link between the exponential moments and $\L(u)$ has been shown, all the other results are already proved except the convexity property in point $(3)$. Let $u,v\in\pos$, $m\in\M$ and $0\leq s \leq 1$ then by Hölder inequality we have
\begin{equation*}
	e^{\L(su+(1-s)v)_m} = \E{e^{su\trp \card_{\M}(\T^m)}e^{(1-s)v\trp \card_{\M}(\T^m)}} \leq \E{e^{u\trp \card_{\M}(\T^m)}}^s \E{e^{v\trp \card_{\M}(\T^m)}}^{1-s},
\end{equation*}
which proves the convexity property in point $(3)$ and concludes the proof of Theorem \ref{theorem1}.

\subsection{Application to Hawkes processes}\label{section3.3}

We now state Theorem \ref{theorem2}, which provides type-specific exponential moment bounds for multivariate Hawkes processes.

Consider $\bm{\mu}:=(\mu_m)_{m\in\M} \in \R_+^{\M}$ and homogeneous interaction functions $\h = (h_{ij})_{i,j\in\M}$, meaning that,
\[
h_{ij}(s,t)=h_{ij}(t-s), \qquad i,j\in\M, \ s,t\in\R.
\]

Let $(\Omega,\F,\mathds{P})$ be a probability space endowed with a filtration $(\F_t)_{t\in\R}$. A Hawkes process with parameters $(\M,\bm{\mu},\bm{h})$ is a multivariate point process $\bm{N}:=(N^m)_{m\in\M}$, i.e. random locally finite subsets of $\R$, such that
\begin{itemize}
    \item $\bm{N}$ is adapted to $(\F_t)_{t\in\R}$,
    \item almost surely, for all distinct $m,m'\in\M$, the point processes $N^m$ and $N^{m'}$ have no common points,
    \item $\bm{N}$ admits a predictable intensity process $\bm{\lambda}:=(\lambda^m)_{m\in\M}$ satisfying, where $dN^m$ denotes the counting measure associated with $N^m$, the following equality
    \begin{equation}\label{eq hawkes}
        \lambda_t^m = \mu_m + \sum_{m'\in\M} \itg_{-\infty}^{t-} h_{m',m}(t-s) dN_s^{m'}, \qquad m\in\M,\ t\in\R.
    \end{equation}
\end{itemize}

Informally, $\lambda_t^m dt$ represents the conditional probability that $N^m$ has a point in $(t,t+dt]$, given $\F_t$. One can interpret $\mu_m$ as the immigration rate of points of type $m$, and $h_{m',m}(t-s)$ as the rate at time $t$ for new points of type $m$ generated by a point of type $m'$ at time $s$. More precisely, the well-known cluster representation, see \cite{hawkes_cluster_1974}, states that $\bm{N}$ admits a cluster representation in which points are realised as birth times of independent $\Pois(\h)$-clusters with roots arriving according to a multitype Poisson process on $\R$ with intensity $\bm{\mu}$.

It was shown in \cite{bremaud_stability_1996} that such a process exists and admits a stationary version whenever the spectral radius of the matrix $\bm{H}=(\Vone{ h_{m,m'}} )_{m,m'\in\M}$ satisfies $\spr(\bm{H})<1$. 

Combining Theorem \ref{theorem1} with the cluster representation and the arguments developed in \cite{leblanc_exponential_2024} yields the following result, Theorem \ref{theorem2} below. The proof is given in Section \ref{section3.4}

\begin{theorem}\label{theorem2}
	Let $\bm{N}$ be a Hawkes process with parameters $\M,\bm{\mu},\h$. Let $\L$ denote the function introduced in Theorem \ref{theorem1} with $\H = (\Vert h_{m,m'} \Vert_1)_{m,m'\in\M}$. Let $\bm{f} = (f^m)_{m\in\M}$ be nonnegative measurable functions, ie $f^m : \R \rightarrow \R_+$ for $m\in\M$. Define 
	\[ \bm{N}(\bm{f}) = \sum_{m\in\M} \sum_{x\in N^m} f^m(x).\]
	Then, for any $T>0$ we have
	\begin{equation}
		\E{e^{\bm{N}(\bm{f})}} \leq \exp\left( T \bm{\mu}\trp \big(e^{\L(u_T(\bm{f}))}-\ones\big) \right),
	\end{equation}
	where
	\[ u_T(\bm{f}) := \Big(\sup_{x\in\R} \sum_{n\in\Z} f^m(x+nT)\Big)_{m\in\M}.\]
	Moreover, if $\H \in \Ge{r}{K}$ with $r<1$, $K<\infty$ and if $\vert u_T(\bm{f}) \vert_{\infty} \leq t_0(r,K) = \frac{\log((1+r)/2r)}{1+2K/(1-r)}$, the following quantitative bounds hold, where the second one also requires that $\H\trp \in \Ge{r}{\tilde{K}}$ for $\tilde{K}<\infty$.
	\begin{align}
		\bm{\mu}\trp \big(e^{\L(u_T(\bm{f}))}-\ones\big) \leq \ &  e^{\vert u_T(\bm{f}) \vert_{\infty}(1+\frac{2K}{1-r})} \bm{\mu}\trp \big(\Id-\frac{1+r}{2r}\H\big)^{-1}u_T(\bm{f})\label{quant2}\\
		\leq \ &  \vert \bm{\mu}\vert_{\infty} e^{\vert u_T(\bm{f}) \vert_{\infty} (1+\frac{2K}{1-r})}  \big(1+\frac{2\tilde{K}}{1-r}\big) \vert u_T(\bm{f}) \vert_1.\label{quant3}
	\end{align}
\end{theorem}

\begin{remark}
	Let $u\in \pos$ and $L>0$. Taking $\bm{f} = (u_m \ind_{[0,L)})$, gives $\bm{N}(\bm{f}) = u\trp \bm{N}([0,L))$ where $\bm{N}([0,L)) = (N^m([0,L))_{m\in\M}$ is the number of points of each type in $[0,L)$. Thus one can apply Theorem \ref{theorem2} with $T=L$ and obtain (quantitative bounds also holds) 
	\begin{equation*}
		\E{e^{u\trp \bm{N}([0,L))}} \leq  \exp\left( L \bm{\mu} \trp \big( e^{\L(u)} -\ones \big)\right).
	\end{equation*}
    Theorem 1.4 of \cite{leblanc_exponential_2024} is recovered by taking $u=t\ones$ with $0\leq t \leq t_0(r,K)$. Theorem \ref{theorem2} allows to take, for example, $u = t\e_m$ which yields sharper type-specific estimates.
\end{remark}

\begin{remark}
	The bounds \eqref{quant2} and \eqref{quant3} correspond to successive simplifications of the estimate obtained from Theorem \ref{theorem1}. Equation \eqref{quant2} is the combination of the bound \eqref{th1eq8} applied to $\L(u_T(\bm{f}))$ and inequality $e^x-\ones \preceq e^{\vert x\vert_{\infty}} x, \ x\in\pos$. Since $e^{\vert u_T(\bm{f}) \vert_{\infty}(1+\frac{2K}{1-r})} \leq \frac{1+r}{2r}$, equation \eqref{quant2} becomes sublinear with respect to $u_T(\bm{f})$. Finally, \eqref{quant3} follows from the estimate $x\trp A y \leq \vert x\vert_{\infty} \Vvert A\Vvert_1 \vert y\vert_1$ for $x,y$ vectors and $A$ a matrix. Assumption $\H\trp \in \Ge{r}{\tilde{K}}$ naturally leads to bounds on the norm $\Vvert \big(\Id-\frac{1+r}{2r}\H\big)^{-1}\Vvert_1$ since $\Vvert A\trp \Vvert_{\infty} = \Vvert A \Vvert_{1}$ for any matrix $A$, and thus,
	\begin{equation*}
		\H^{\intercal} \in \Ge{r}{\tilde{K}} \quad \iff \quad \forall n\geq 1, \  \Vvert \H^n \Vvert_{1} \leq \tilde{K} r^n.
	\end{equation*}
\end{remark}

\begin{remark}
	The result also holds for nonlinear Hawkes processes with intensities
	\begin{equation*}
		\lambda^m_t = \Phi^m\Big(\mu_m + \sum_{m'\in\M}\itg_{-\infty}^{t-} h^m_{m'}(t-s)dN^{m'}_s\Big),
	\end{equation*} 
	provided that the link functions $\Phi^m$ are Lipschitz. Indeed, in this case the nonlinear Hawkes process can be stochastically dominated by the linear Hawkes process
	\begin{equation*}
		\lambda^m_t = \Phi^m(\mu_m) + \sum_{m'\in\M}\itg_{-\infty}^{t-} \Lip(\Phi^m) \vert h^m_{m'}\vert (t-s)dN^{m'}_s.
	\end{equation*}
\end{remark}

The following corollary is an immediate consequence of Theorem \ref{theorem2} and the previous remarks.

\begin{corollary}\label{cor3.14}
	Let $\bm{N}$ be a Hawkes process with parameters $\M,\bm{\mu},\h$. Let $\L$ denote the function introduced in Theorem \ref{theorem1} with $\H = (\Vone{h_{m,m'}})_{m,m'\in\M}$. Suppose that $\H \in \Ge{r}{K}$ and $\H^{\intercal} \in \Ge{r}{\tilde{K}}$ with $r<1$ and $K,\tilde{K} <\infty$. Then, for any $m\in\M$ and any $L>0$, the following holds,
	\begin{equation}
		\E{e^{tN^m([0,L))}} \leq \exp\Big( t L \vert \bm{\mu} \vert_{\infty} \big(1+\frac{2\tilde{K}}{1-r}\big) e^{(1+\frac{2K}{1-r})t}\Big) \leq \exp\Big( t L \vert \bm{\mu} \vert_{\infty} \big(1+\frac{2\tilde{K}}{1-r}\big) \frac{1+r}{2r} \Big),
	\end{equation}
	where $0\leq t\leq t_0(r,K) = \frac{\log((1+r)/2r)}{1+2K/(1-r)}$.
\end{corollary}

\begin{remark}\label{remark3.14}
	In a high-dimensional setting, if $\vert\bm{\mu}\vert_\infty$, $r$, $K$, and $\tilde K$ are independent of $M$, then the previous estimate provides dimension-free exponential moment bounds for the number of points of a fixed type.
\end{remark}

\subsection{Proof of Theorem \ref{theorem2}}\label{section3.4}

The cluster representation of Hawkes processes, introduced in \cite{hawkes_cluster_1974}, states the following. Let $(\Pi^m)_{m\in\M}$ be independent Poisson point processes on $\R$ with intensities $(\mu_m)_{m\in\M}$. Conditionally on $(\Pi^m)_{m\in\M}$, for every $m\in\M$ and every $x\in\Pi^m$, let $G_x^m$ be independent $\Pois(\h)$-clusters with root of type $m$ born at time $x$. Then the following holds,
\begin{equation}\label{cluster rep}
	(N^m)_{m\in\M} \overset{d}{=} \big(\{ s\in\R \mid \exists m'\in\M, \exists x\in\Pi^{m'}, \exists u\in \U \ \text{such that} \ (u,m,s)\in G^{m'}_x\}\big)_{m\in\M}.
\end{equation}

Recall that the clusters of Hawkes processes are homogeneous, i.e. $h_{m,m'}(s,t) = h_{m,m'}(t-s)$. Thus each cluster can be viewed as a $\Pois(\H)$-BGW tree together with a temporal embedding on $\R$ given by the birth times of its individuals.

For a cluster $G = (u,\tp(u),\bd(u))_{u\in\A}$, define
\[G(\bm{f}) = \sum_{u\in\A}f^{\tp(u)}(\bd(u)).\] 
The argument now follows the approach developed in \cite{leblanc_exponential_2024}. By the cluster representation and Campbell's formula for Poisson point processes,
\begin{align}
	\E{e^{\bm{N}(\bm{f})}} & = \E{\prod_{m\in\M} \prod_{x\in\Pi^m} \E{e^{G^m_x(\bm{f})}}}\nonumber \\
	& = \exp\left( \sum_{m\in \M} \itg_{\R} \big[\psi(x,m,\bm{f})-1\big] \mu_m dx\right) \label{eq6.8}
\end{align}
where
\begin{equation*}
	\psi(x,m,\bm{f}) = \E{ e^{G^m_x(\bm{f})}} = \E{e^{G^m_0(\bm{f}\circ \tau_x)}},
\end{equation*}
with $G^m_0$ be a $\Pois(\h)$-cluster with root of type $m$ and born at time $0$, and $\tau_x(s) = s+x$. Let $T>0$, decomposing the integral over $\R$ into translates of $[0,T)$ yields,
\begin{equation}\label{eq6.9}
	\itg_{\R} \big[\psi(x,m,\bm{f})-1\big] dx  = \itg_0^T \sum_{n\in\Z} \big[\psi(x+nT,m,\bm{f})-1\big] dx.
\end{equation}
To bound $\sum_{n\in\Z} \big[\psi(x+nT,m,\bm{f})-1\big]$ we use the same arguments as in \cite{leblanc_exponential_2024}, which we explain again for clarity.
\begin{align*}
	\sum_{n\in\Z} \big[ \psi(x+nT,m,\bm{f})-1\big] & = \E{ \sum_{n\in\Z} \big[e^{G^m_0(\bm{f}\circ \tau_{x+nT})}-1\big]} \\
	& \leq \E{ -1+\prod_{n\in\Z} e^{G^m_0(\bm{f}\circ \tau_{x+nT})}  } \\
	& = \E{e^{ G^m_0\big(\sum_{n\in\Z} \bm{f}\circ \tau_{x+nT}\big)}}-1.
\end{align*}
The second line comes from $\sum_n (x_n-1) \leq -1+\prod_n x_n$ for any $x_n\geq 1$. Let $m\in\M$, then by definition of $u_T(\bm{f})$, we have,
	\[\forall s\in\R, \ \Big(\sum_{n\in\Z} \bm{f}^m\circ \tau_{x+nT}\Big)(s) = \sum_{n\in\Z} f^m(s+x+nT) \leq u_T(\bm{f})_m.\]
	Thus it is clear that
\begin{equation*}
	G^m_0\big(\sum_{n\in\Z} \bm{f}\circ \tau_{x+nT}\big) \leq  u_T(\bm{f})\trp \card_{\M}(G^m_0) \overset{d}{=} u_T(\bm{f})\trp \card_{\M}(\T^m),
\end{equation*}
with $\T^m$ be a $\Pois(\H)$-BGW tree with root of type $m$. Therefore,
\begin{equation}\label{eq6.10}
	\sum_{n\in\Z} \big[\psi(x+nT,m,\bm{f})-1\big] \leq \exp\big( \e_{m}\trp \L(u_T(\bm{f}))\big)-1.
\end{equation}

Combining equations \eqref{eq6.8}, \eqref{eq6.9}, and \eqref{eq6.10} yields
\begin{equation*}
	\E{e^{\bm{N}(\bm{f})}} \leq \exp\Big[ T \bm{\mu}\trp \big(e^{\L(u_T(\bm{f}))}-\ones\big)\Big],
\end{equation*}
which is the result. Assume now that $\H \in\Ge{r}{K}$ with $r<1$, $K<\infty$ and suppose that $\vert u_T(\bm{f}) \vert_{\infty} \leq t_0(r,K)$. By \eqref{th1eq7} and \eqref{th1eq8} of Theorem \ref{theorem1} we have $\L(u_T(\bm{f})) \preceq \big( \Id-\frac{1+r}{2r}\H\big)^{-1}u_T(\bm{f})$, and $\vert\L(u_T(\bm{f}))\vert_{\infty} \leq \big(1+\frac{2K}{1-r}\big) \vert u_T(\bm{f})\vert_{\infty}$. Thus, by inequality $e^x-\ones \preceq e^{\vert x\vert_{\infty}} x, \ x\in\pos$, we obtain 
\begin{equation*}
	\bm{\mu}\trp \big(e^{\L(u_T(\bm{f}))}-\ones\big) \leq e^{(1+\frac{2K}{1-r}) \vert u_T(\bm{f})\vert_{\infty}} \bm{\mu}\trp \big(\Id-\frac{1+r}{2r}\H\big)^{-1} u_T(\bm{f}),
\end{equation*}
which is \eqref{quant2}. To obtain \eqref{quant3} from \eqref{quant2} we use the classical inequality $x\trp A y \leq \vert x\vert_\infty \Vvert A\Vvert_1 \vert y\vert_1$. Since $\Vvert A\Vvert_1 = \Vvert A\trp\Vvert_{\infty}$ for any matrix $A$, from assumption $\H\trp \in\Ge{r}{\tilde{K}}$ we have
\begin{align*}
	\Vvert \big(\Id-\frac{1+r}{2r}\H\big)^{-1} \Vvert_1  \leq 1 + \sum_{n\geq 1} \Vvert \big(\frac{1+r}{2r} \H\big)^n \Vvert_1 & = 1 + \sum_{n\geq 1} \big(\frac{1+r}{2r}\big)^n \Vvert  (\H^{\intercal})^n \Vvert_{\infty} \\
	& \leq 1 + \sum_{n\geq 1} \tilde{K} \big(\frac{1+r}{2}\big)^n\\
	& \leq  1 + \frac{2\tilde{K}}{1-r}.
\end{align*}
Which concludes the proof of \eqref{quant3} and Theorem \ref{theorem2}.

\section{Tail estimates for inhomogeneous Poisson clusters}\label{section4}

In this section we derive exponential tail estimates for inhomogeneous multitype Poisson clusters. The inhomogeneous setting naturally leads to a bivariate version of the classical convolution operator. 

\subsection{Bivariate convolutions and associated norms}\label{sec4.1}

Let $\bm{z}:\R \to \mathcal{M}_{n,p}(\R)$ be a measurable matrix-valued function. Define its matrix-valued $L^1$ and $L^\infty$ norms by
\[\Vone{\bm{z}} = \big(\Vone{z_{ij}}\big)_{i,j} \ , \quad \Vity{\bm{z}} = \big(\Vity{z_{ij}}\big)_{i,j}.\]

Consider now two bivariate measurable matrix-valued functions $\bm{v}:\R^2 \to \mathcal{M}_{n,p}(\R)$ and $\bm{w} : \R^2 \to \mathcal{M}_{p,q}(\R)$. Their bivariate convolution $\bm{v} \star \bm{w} : \R^2 \to \mathcal{M}_{n,q}(\R)$, when defined, is
\[\bm{v} \star \bm{w}(s,t) = \itg_{-\infty}^{\infty} \bm{v}(s,x) \bm{w}(x,t)dx, \quad s,t\in\R.\]
\begin{remark}
	If one takes $\bm{v}(s,t) = \bm{f}(t-s)$ and $\bm{w}(s,t) = \bm{g}(t-s)$ then one has $\bm{v} \star \bm{w}(s,t) = \bm{f} \star \bm{g}(t-s)$ the usual convolution for univariate matrix-valued functions.
\end{remark}

For a measurable bivariate matrix-valued function $\bm{v}$, define the following bivariate norms,
\[\Vityty{\bm{v}} = \Vity{ s\in\R\mapsto \Vity{\bm{v}(s,\cdot)}} = \big( \sup_{s,t \in\R} \vert v_{ij}(s,t)\vert\big)_{i,j}\]
and
\[ \Vityone{\bm{v}} = \Vity{s\in\R\mapsto \Vone{\bm{v}(s,\cdot)}} = \big(\sup_{s\in\R} \Vone{v_{ij}(s,\cdot)}\big)_{i,j}.\]
The residual integral of $\bm{v}$, denoted $\Res_{\bm{v}}:\R^2 \to \mathcal{M}_{n,p}(\R)$, quantifies for $s,t\in\R$, the mass of $\bm{v}(s,\cdot)$ remaining after time $t$. It is defined by
\[\Res_{\bm{v}}(s,t) = \itg_t^{\infty} \bm{v}(s,x)dx.\]
The uniform residual integral of $\bm{v}$ at delay $d \in\R$, denoted $\Res^\infty_{\bm{v}}(d)$, measures the maximal tail mass at delay $d$. It is defined by 
\[\Res^{\infty}_{\bm{v}}(d) = \Vert s\in\R \mapsto \Res_{\bm{v}}(s,s+d)\Vert_{L^{\infty}}.\]

\begin{remark}
	One has that $\Res_{\bm{v}} = \bm{v} \star (\ind_{D} \Id)$ where $D = \{(s,t) \mid t\leq s\}$.
\end{remark}

\subsection{Tail estimates for inhomogeneous Poisson clusters}

In Section \ref{section3} we derived exponential estimates for $\Pois(\H)$-BGW trees, and applied these results to Hawkes processes. In this section we study inhomogeneous Poisson clusters generated by measurable nonnegative bivariate functions $\h = (h_{ij})_{i,j\in\M}$. The matrix $\H$ is defined by $\H = \Vityone{\h}$ and assumed to have finite entries. Clusters consist in $\M$-type random trees with birth dates attached to each individuals. 

An inhomogeneous Poisson cluster can be viewed as a multitype branching tree together with birth times attached to each individual. If one ignores the temporal component, the genealogical structure is stochastically dominated by a $\Pois(\H)$-BGW tree. Consequently, the exponential estimates derived in Section \ref{section3} immediately transfer to the total progeny of clusters.

The main additional difficulty in the inhomogeneous setting is the temporal distribution of the birth times. The purpose of this section is therefore to quantify the number of descendants born far from the root time, namely in intervals of the form $[t,\infty)$ with $t$ possibly large. Such estimates provide quantitative control of long-range temporal dependence in Hawkes processes.

Theorem \ref{theorem4.5} below provides quantitative estimates on the temporal tail of inhomogeneous Poisson clusters. More precisely, for a cluster born at time $s\in\R$, we study the number of descendants whose birth times occur after time $t\in\R$. The key idea is that the genealogy of clusters is controlled by a multitype $\Pois(\H)$-BGW tree, while the temporal locations of the descendants are governed by the functions $\h$. The corresponding log-Laplace transforms satisfy a fixed-point equation \eqref{eq4.2} reflecting the branching property of clusters. Contributions to the tail come either from the root itself (when $t\leq s$), or from descendants generated recursively through the interactions $\h$. The bounds obtained in Theorem \ref{theorem4.5} show that the decay of the cluster tail is directly inherited from the decay properties of $\h$. In particular:
\begin{itemize}
    \item exponentially decaying interactions lead to exponentially decaying cluster tails,
    \item polynomially decaying interactions lead to polynomial cluster tails,
    \item compactly supported interactions yield geometric-type decay controlled by powers of $\H$.
\end{itemize}
The proof of Theorem \ref{theorem4.5} is given in Section \ref{section4.3}.

\begin{theorem}\label{theorem4.5}
    Let $G_s^m$ be a $\Pois(\h)$-cluster with root of type $m\in\M$ born at time $s\in\R$. For $u\in \pos$ and $t\in\R$ define the tail log-Laplace transform by,
\[\Ta_{u,m}(s,t) = \log\bigg( \E{e^{u\trp \card_{\M}\big(G^{m}_{s} \cap [t,\infty)\big)}} \bigg).\]
Then, $\bTa{u}=(\Ta_{u,m})_{m\in\M}$ satisfies the following fixed-point equation,
\begin{equation}\label{eq4.2}
        \bTa{u}(s,t) =  u \ind_{t\leq s}  + \Big[\h \star \big(e^{\bTa{u}}-\ones \big)\Big](s,t), \quad s,t\in\R.
\end{equation}
	Consider $\L(u)$ from Theorem \ref{theorem1} with $\H = \Vityone{\h}$. Then we have $\Vityty{\bTa{u}} \preceq \L(u)$ and the following bound holds,
	\begin{equation}\label{eq4.3}
		\bTa{u}(s,t) \preceq  u \ind_{t\leq s} + \Res_{\bm{\Psi}_u}(s,t) u, \quad s,t\in\R,
	\end{equation}
where $\displaystyle \bm{\Psi}_u = \sum_{n=1}^{\infty}\big( \h \diag(e^{\L(u)}) \big)^{\star n}$, and $\cdot^{\star n}$ denotes the $n$-fold bivariate convolution. In particular the following holds.
\begin{enumerate}
	\item Suppose that $\H \diag(e^{\L(u)}) \in \Ge{\alpha}{K}$ for some $\alpha <1$ and that \[\Res_{\h}^{\infty}(d) \preceq A e^{-cd} \H, \quad \forall d\geq 0,\] for some $A,c>0$. Then for all $0<\varepsilon< \frac{c(1-\alpha)}{(1-\alpha)+A\alpha}$ we have \[\vert\bTa{u}(s,s+d)\vert_{\infty} \leq   B_{\varepsilon} e^{-\varepsilon d} \vert u \vert_{\infty}, \quad \forall s\in\R, \ \forall d>0,\]
	with $B_{\varepsilon}= \frac{K\alpha\big(1+\frac{A\varepsilon}{c-\varepsilon}\big)}{1-\alpha\big(1+\frac{A\varepsilon}{c-\varepsilon}\big)}$. 
	\item Suppose that $\H \diag(e^{\L(u)}) \in \Ge{\alpha}{K}$ for some $\alpha <1$ and that \[\Res_{\h}^{\infty}(d) \preceq \frac{A}{(1+d)^{\gamma}}\H, \quad \forall d \geq 0,\] for some $A,\gamma>0$. Then for any $0<\delta<1$ we have \[\vert\bTa{u}(s,s+d)\vert_{\infty} \leq \frac{B_{\delta} \vert u\vert_{\infty}}{(1+d)^{\gamma(1-\delta)}}, \quad \forall s\in\R, \ \forall d> 0,\]
	with $B_{\delta}=\frac{\alpha K(K+1) }{(1-\alpha)^2}  \Big[A\big(1+\frac{\gamma}{\delta\log(1/\alpha)}\big)^{\gamma}+1\Big]$. 
	\item Suppose that $\spr(\H)<1$, that $\vert \L(u)\vert_{\infty}<\infty$, and that $\Res_{\h}^{\infty}(A) = 0$ for some $A>0$, equivalently, $\h$ vanishes almost everywhere outside $\{(s,t)\in\R^2\mid t\leq s+A\}$. Then there exists a constant $C$ depending on $u,\H$ and $A$ such that \[\bTa{u}(s,s+d) \preceq C \H^{\lfloor d/A \rfloor} \L(u), \quad \forall d\geq 0.\]
\end{enumerate}
If $\H\in\Ge{r}{K}$ with $r<1$ and if $\vert u\vert_{\infty} \leq t_0(r,K)$ from Theorem \ref{theorem1}, then in points $(1)$ and $(2)$ we have $\H \diag(e^{\L(u)}) \in \Ge{\alpha}{K}$ with $\alpha = \frac{1+r}{2}$, and in point $(3)$ one can take $C = \big(\frac{1+r}{2r}\big)^{1+2K/(1-r)}$.
\end{theorem}

\begin{remark}
	Theorem \ref{theorem4.5} shows that the temporal decay of cluster tails inherits the decay properties of the interaction functions $\h$. Exponential decay of $\h$ yields exponential tail bounds, polynomial decay yields polynomial tail bounds, and compact support of $\h$ leads to geometric decay governed by powers of $\H$.
\end{remark}

\begin{remark}
	The matrix $\H\diag(e^{\L(u)})$ plays the role of an exponentially tilted offspring matrix. It naturally appears when controlling exponential moments recursively along the branching structure.
\end{remark}

\begin{remark}
	If $\Res_{\h}^{\infty}(A) = 0$ for some $A>0$, then from Equation \eqref{eq4.3} and direct support considerations, one can derive that 
	\begin{equation*}
		\bTa{u}(s,s+d) \preceq \sum_{n\geq \lfloor d/A \rfloor}\big(\H\diag(e^{\L(u)})\big)^{n} u \preceq \big(\H\diag(e^{\L(u)})\big)^{\lfloor d/A \rfloor} B u, \quad \forall d\geq 0,
	\end{equation*}
	with $B = \big(\Id-\H\diag(e^{\L(u)})\big)^{-1}$. Thus point $(3)$ is asymptotically sharper since we have $\spr(\H) \leq \spr\big(\H\diag(e^{\L(u)})\big)$.
\end{remark}

\begin{remark}
	The condition $\H \diag(e^{\L(u)}) \in \Ge{\alpha}{K}$ with $\alpha <1$ is equivalent to the condition $\spr\big( \H \diag(e^{\L(u)}) \big) <1$ and thus, as mentioned in Remark \ref{remark3.3}, Theorem \ref{thm exact multD} proves that if $\spr(\H)<1$ and $\L(u)$ is finite then $\spr\big( \H \diag(e^{\L(u)}) \big) <1$ except if $u$ is a boundary point.
\end{remark}

\subsection{Proof of Theorem \ref{theorem4.5}}\label{section4.3}

The proof follows two main steps. First, we derive a recursive fixed-point equation for the tail log-Laplace transform using the branching property of Poisson clusters. Second, we analyze this equation through an iterative scheme and bivariate convolution estimates, which allows us to transfer the decay properties of $\h$ to tails of clusters. This second step relies on several technical results on bivariate convolutions collected in Appendix \ref{appendixA}.

Let $u\in\pos$ and $G^{m}_{s}$ be an inhomogeneous $\Pois(\h)$-cluster with root of type $m$ and born at time $s$. Recall that for $m\in\M, \ s\in\R$ and $t\in\R$ the function $\Ta_{u,m}$ is defined by
    \[\Ta_{u,m}(s,t) = \log\Big( \E{e^{u\trp \card_{\M}\big(G^{m}_{s} \cap [t,\infty)\big)}} \Big).\]
    Recall that, conditionally on the first generation, each individual in that generation independently generates a $\Pois(\h)$-cluster of which it is the root. This branching decomposition yields a recursive equation for the tail log-Laplace transform.
    Fix $s\in\R$ and $m\in\M$. For $m'\in\M$, let $X_{m,m'}(s)\sim \Pois(\Vert h_{m,m'}(s,\cdot)\Vert_1)$, and $Z^{m',k}\sim \frac{h_{m,m'}(s,w)}{\Vert h_{m,m'}(s,\cdot)\Vert_1} dw$, $k\in\N^*$, where all variables are independent. Conditionally on these variables, let $G^{m'}_{Z^{m',k}}$ be independent $\Pois(\h)$-clusters with root of type $m'$ and born at time $Z^{m',k}$. We have    \begin{equation*}
        e^{\Ta_{u,m}(s,t)} = \E{e^{u\trp \card_{\M}\big(G^{m}_{s} \cap [t,\infty)\big)}}  = \E{e^{u\trp \e_m \ind_{t\leq s}} \prod_{m'\in\M} \prod_{k = 1}^{X_{m,m'}(s)} e^{u\trp \card_{\M}\big(G^{m'}_{Z^{m',k}}\cap[t,\infty)\big)}}.
    \end{equation*}
    By independence we first integrate over the clusters and then over the birth dates $Z^{m',k}$, yielding,
    \begin{equation*}
    	e^{\Ta_{u,m}(s,t)}  = \E{e^{u\trp \e_m \ind_{t\leq s}}\prod_{m'\in\M} \prod_{k = 1}^{X_{m,m'}(s)} \E{e^{\Ta_{u,m'}(Z^{m'},t)}}},
    \end{equation*}
    where $Z^{m'}\sim Z^{m',1}$. Note that by definition of $Z^{m'}$ we have
    \begin{equation*}
    	\E{e^{\Ta_{u,m'}(Z^{m'},t)}} = \itg_{\R} \frac{h_{m,m'}(s,w)}{\Vert h_{m,m'}(s,\cdot)\Vert_1} e^{\Ta_{u,m'}(w,t)} dw = \frac{\itg_{\R} h_{m,m'}(s,w) \big(e^{\Ta_{u,m'}(w,t)} -1\big) dw}{\Vert h_{m,m'}(s,\cdot)\Vert_1}+1.
    \end{equation*}
    Thus integrating over the $X$'s together with the mgf of the Poisson distribution give,
    \begin{align*}
        e^{\Ta_{u,m}(s,t)} & = \exp\Big[ u\trp \e_m \ind_{t\leq s} + \sum_{m'} \itg_{\R} h_{m,m'}(s,w) \big(e^{\Ta_{u,m'}(w,t)}-1\big) dw\Big]\\
        & = \exp\Big[ \Big(u \ind_{t\leq s} +\big[\h\star \big(e^{\bTa{u}}-\ones \big)\big](s,t)\Big)_{m}\Big],
    \end{align*}
    which is the fixed-point equation \eqref{eq4.2}. To derive quantitative bounds from the fixed-point equation, we introduce the Picard iterates associated with \eqref{eq4.2}. Define $\bTa{u}^0$ by $\bTa{u}^0(s,t) = 0$, $s,t\in\R$, and for $n\geq 1$ let $\bTa{u}^n$ defined by
    \begin{equation}\label{demth4eq1}
    	\bTa{u}^n(s,t) = u \ind_{t\leq s} + \big[\h\star \big(e^{\bTa{u}^{n-1}}-\ones \big)\big](s,t), \quad s,t\in\R.
    \end{equation}
    As in the proof of Theorem \ref{theorem1}, the quantity $\bTa{u}^{n+1}$ corresponds to the log-Laplace transform obtained by truncating the cluster after generation $n$. The monotone convergence theorem yields $\bTa{u}^n(s,t) \xrightarrow[n\to\infty]{} \bTa{u}(s,t)$ for any $s,t\in\R$.
    Thus we may write
    \begin{equation*}
    	\bTa{u}  = \sum_{n\geq 0} (\bTa{u}^{n+1}-\bTa{u}^n).
    \end{equation*}
    Since the underlying $\M$-type trees of clusters are stochastically dominated by $\Pois(\H)$-BGW trees with $\H = \Vityone{\h}$ it is clear that $\bTa{u}^n(s,t) \preceq \L(u)$ for any $n\geq 0$ and  any $s,t\in\R$. Thus, for any $n\geq 1$, \eqref{demth4eq1} combined with inequality $e^x-e^y\preceq \diag(e^{x})(x-y)$ for any $y\preceq x$ yields,  
    \begin{align*}
    	\bTa{u}^{n+1}-\bTa{u}^n & = \h\star(e^{\bTa{u}^n}-e^{\bTa{u}^{n-1}})\\
    	& \preceq \h\diag(e^{\L(u)})\star (\bTa{u}^n-\bTa{u}^{n-1})\\
    	& \preceq \Big(\h\diag(e^{\L(u)})\Big)^{\star n} \star (\bTa{u}^{1}-\bTa{u}^{0})\\
    	& = \Big(\h\diag(e^{\L(u)})\Big)^{\star n} \star u\ind_{D},
    \end{align*}
    where $\ind_{D}(s,t) = \ind{t\leq s}$. It follows that,
    \begin{align*}
    	\bTa{u} & = \sum_{n\geq 0} (\bTa{u}^{n+1}-\bTa{u}^n)\\
    	& \preceq u\ind_{D} + \sum_{n\geq 1} \big(\h\diag(e^{\L(u)})\big)^{\star n} \star u\ind_{D}\\
    	& = u\ind_{D} + \bm{\Psi}_u \star \ind_{D} \Id  u\\
    	& = u\ind_{D} + \Res_{\bm{\Psi}_u} u,\\
    \end{align*}
    which proves \eqref{eq4.3}.
    
    Finally let us prove points $(1)$, $(2)$, and $(3)$. Points $(1)$ and $(2)$ are straightforward from Proposition \ref{prop4.4} and Proposition \ref{prop4.3} respectively. And, in the case where $\H \in \Ge{r}{K}$ and $\vert u\vert_{\infty} \leq t_0(r,K)$, we have that $\vert \L(u)\vert_{\infty} \leq \log((1+r)/2r)$ and thus it is clear that $\H\diag(e^{\L(u)}) \in\Ge{\alpha}{K}$ with $\alpha = \frac{1+r}{2}$. It concludes the proof of points $(1)$ and $(2)$ by the above mentioned Propositions.
    
    To prove point $(3)$, we discretize time at scale $A$ and study the decay of the sequence
\[\g_u(n)=\Vity{s\in\R \mapsto \bTa{u}(s,s+nA)}, \quad n\in\N\]
which measures the tail at distance $nA$ from the root. Since the interval $[t,\infty)$ decreases as $t$ increases, the map $t\mapsto \bTa{u}(s,t)$ is non-increasing. Thus, since $\h$ is supported in $\{(s,t)\in\R^2 \mid t\leq s+A\}$, we have for $n\in\N^*$,
\begin{align*}
	\bTa{u}(s,s+nA) & =  \itg_{-\infty}^{s+A} \h(s,w) (e^{\bTa{u}(w,s+nA)}-\ones)dw\\
	& \preceq  \itg_{-\infty}^{s+A} \h(s,w) (e^{\bTa{u}(w,w+(n-1)A)}-\ones)dw\\
	& \preceq  \itg_{-\infty}^{s+A} \h(s,w) (e^{\g_u(n-1)}-\ones) dw \\
	& \preceq  \H (e^{\g_u(n-1)}-\ones).
\end{align*}
Thus we have
\[\forall n\in\N, \quad \g_u(n+1) \preceq \H(e^{\g_u(n)}-\ones), \quad   \text{and} \quad \g_u(0)\preceq \L(u).\]
Using inequality $e^x-\ones \preceq e^{\vert x\vert_{\infty}} x$ for $x\succeq 0$ together with an induction, one has, for all $n\geq 0$,
\begin{equation}\label{demth4eq2}
    \g_{u}(n) \preceq e^{\sum_{k=0}^{n-1}\vert \g_{u}(k)\vert_{\infty}} \H^n \L(u).
\end{equation}
Let $m\in\M$, by definition of $\H = \Vityone{\h}$ and by the support assumption, it follows that for any $s\in\R$ and any $n\in\N$, the random variables $e^{u\trp \card_{\M}(G_s^m\cap[s+nA,\infty))}$ are all stochastically dominated by $e^{u\trp \card_{\M}(\T^m_{\geq n})}$ where $\T^m$ is a $\Pois(\H)$-BGW tree with root of type $m$. Since $\L(u)$ is finite, $e^{u\trp \card_{\M}(\T^m)}$ is integrable, thus by dominated convergence theorem we have
\begin{equation*}
	\g_u(n) \preceq \Big(\E{e^{u\trp \card_{\M}(\T^m_{\geq n})}}\Big)_{m\in\M} \xrightarrow[n\to\infty]{} 0.
\end{equation*}
Cesàro-type averaging implies that for any $\varepsilon >0$ there exists $C=C_{\varepsilon}$ (depending on $u,\H$ and $A$) such that
\begin{equation*}
	\sum_{k=0}^{n-1}\vert \g_{u}(k)\vert_{\infty} \leq \varepsilon n + C_{\varepsilon}, \quad n\geq 0.
\end{equation*}

Choose $\varepsilon>0$ small enough so that $\spr(\H)< e^{-\varepsilon}$. Since $\Vvert \H^n \Vvert_{\infty}^{1/n} \xrightarrow[]{n\to\infty} \spr(\H)$, by \eqref{demth4eq2} we have,
\begin{align*}
	\vert \g_{u}(n)\vert_{\infty} & \leq e^{C_{\varepsilon}} e^{\varepsilon n} \Vvert \H^n\Vvert_{\infty} \vert \L(u)\vert_{\infty} \\
	& \leq C \alpha^n
\end{align*}
for some $C$ and $\alpha<1$ depending on $u$, $\H$, and $A$. With this new bound we have
\begin{equation*}
	\sum_{k=0}^{\infty}\vert \g_{u}(k)\vert_{\infty} <\infty,
\end{equation*}
which proves point $(3)$ by plugging in \eqref{demth4eq2}.

Finally, suppose that $\H\in\Ge{r}{K}$ with $r<1$, and let us prove that $C = \Big(\frac{1+r}{2r}\Big)^{1+2K/(1-r)}$ satisfies $\g_u(n) \preceq C \H^n \L(u)$ in point $(3)$.

One has $\vert \g_{u}(n)\vert_{\infty} \leq \vert \L(u) \vert_{\infty} \leq \log\big(\frac{1+r}{2r}\big)$ for any $n\geq 0$. Plugging in \eqref{demth4eq2} leads to,
\begin{equation*}
    \g_{u}(n)  \preceq  \Big(\frac{1+r}{2r}\H\Big)^n \L(u).
\end{equation*}
Thus since $\H\in\Ge{r}{K}$ we have for $n\geq 1$,
\begin{equation*}
    \vert \g_{u}(n)\vert_{\infty} \leq K \Big(\frac{1+r}{2}\Big)^n \log\big(\frac{1+r}{2r}\big).
\end{equation*}
Again, plugging this new inequality in \eqref{demth4eq2} leads to
\begin{align*}
    \g_{u}(n) &  \preceq \Big(\frac{1+r}{2r}\Big)^{1+K\sum_{k=1}^{n-1} \big(\frac{1+r}{2}\big)^k}\H^n \L(u) \\
    & \preceq \Big(\frac{1+r}{2r}\Big)^{1+2K/(1-r)}\H^n \L(u),
\end{align*}

which is the result. Theorem \ref{theorem4.5} is proved.

\subsection{Clusters with general kernel distributions}

The framework developed above naturally extends beyond kernels admitting densities with respect to the Lebesgue measure. In many situations, offspring birth times may contain deterministic delays, atomic components, or more singular distributions. Introducing kernel-valued interactions allows us to treat these situations in a unified way while preserving the branching structure and convolution framework developed previously.

In the previous framework, for fixed $s\in\R$, the function $h_{ij}(s,\cdot)$ represents, up to normalization, the density of the birth time of a child of type $j$ produced by a parent of type $i$ born at time $s$. The natural generalization therefore consists in replacing these densities by arbitrary nonnegative kernels. To avoid any confusion we denote $\varphi_{ij}$ these objects and $\bm{\varphi} = (\varphi_{ij})_{i,j\in\M}$. In the sequel, for any $i,j\in\M$, $\varphi_{ij}$ is a kernel, meaning that $\varphi_{ij} : \R\times \mathcal{B}(\R) \longrightarrow \R_+$, with $\mathcal{B}(\R)$ the Borel sigma-algebra, is such that for any $s\in\R$, $A \in\mathcal{B}(\R) \mapsto \varphi_{ij}(s,A)$ is a measure, and for any $A\in \mathcal{B}(\R)$, $s \in\R \mapsto \varphi_{ij}(s,A)$ is measurable.

The matrix $\H$ is similarly defined by $H_{ij} = \sup_{s\in\R}\varphi_{ij}(s,\R)$ and assumed to have finite entries. A $\Pois(\bm{\varphi})$-cluster is then defined by the following:
\begin{itemize}
	\item Individuals reproduces independently.
	\item Given an individual $a$ of type $i$ and born at time $s$, independently for any $j$, $a$ has $\Pois(\varphi_{ij}(s,\R))$ children of type $j$.
	\item Independently, for any child $b$ of $a$ of type $j$, we have \[\bd(b) \sim \frac{\varphi_{ij}(s,\cdot)}{\varphi_{ij}(s,\R)}.\]
\end{itemize}

Again, if one ignores the birth dates, the underlying genealogical tree is stochastically dominated by a $\Pois(\H)$-BGW tree

The convolution product naturally extends to kernels. If $\phi$ and $\psi$ are kernels, their convolution $\phi\star\psi$ is also a kernel and defined by
\begin{equation*}
	\phi \star \psi(s,A) = \itg \phi(s,dx)\psi(x,A).
\end{equation*}
Similarly, the convolution between a kernel $\phi$ and a measurable bivariate function $f$, is a measurable bivariate function defined as follows, 
\begin{equation*}
	\phi \star f (s,t) = \itg_{\R} \phi(s,dx) f(x,t), \quad \forall s,t\in\R.
\end{equation*}

With these definitions, Theorem \ref{theorem4.5} can be extended in the exact same formulation. The extension is Theorem \ref{theorem4.6} stated below.

\begin{theorem}\label{theorem4.6}
    Let $G^m_s$ be a $\Pois(\bm{\varphi})$-cluster with root of type $m\in\M$ born at time $s\in\R$. Let $u\in \pos$ and for $t\in\R$ define
\[\Ta_{u,m}(s,t) = \log\bigg( \E{e^{u\trp \card_{\M}\big(G^{m}_s \cap [t,\infty)\big)}} \bigg).\]
Then, $\bTa{u}=(\Ta_{u,m})_{m\in\M}$ satisfies the following fixed-point equation,
\begin{equation}\label{eq4.6}
        \bTa{u}(s,t) =  u \ind_{t\leq s}  + \Big[\bm{\varphi} \star \big(e^{\bTa{u}}-\ones \big)\Big](s,t), \quad s,t\in\R.
\end{equation}
	Consider $\L(u)$ from Theorem \ref{theorem1} with $\H = \big(\sup_{s\in\R}\varphi_{ij}(s,\R)\big)_{i,j\in\M}$ and suppose that $\vert \L(u)\vert_{\infty}<\infty$. Then for any $s,t\in\R$ the following bound holds,
	\begin{equation}\label{eq4.5}
		\bTa{u}(s,t) \preceq  u\ind_{t\leq s} + \bm{\Psi}_u\big(s,[t,\infty)\big)u,
	\end{equation}
where the kernel $\bm{\Psi}_u$ is defined by $\displaystyle\bm{\Psi}_u = \sum_{n=1}^{\infty}\big( \bm{\varphi} \diag(e^{\L(u)}) \big)^{\star n}$.\\
Points $(1),(2)$ of Theorem \ref{theorem4.5} also hold up to replacing, for any $d\geq 0$, $\Res^{\infty}_{\h}(d)$ by $\big(\sup_{s\in\R}\varphi_{ij}(s,[s+d,\infty))\big)_{i,j\in\M}$, and for point $(3)$, one should ask $\varphi_{ij}(s,\cdot)$ to be supported in $(-\infty,s+A]$ for any $s\in\R$ and any $i,j\in\M$.
\end{theorem}

The proof follows exactly the same lines as the proof of Theorem \ref{theorem4.5}, replacing Lebesgue integrals by integrations with respect to kernels. We do not write it again.

\subsection{Tail estimates for BGW trees}\label{sec3.3}

An important application of the kernel framework is the study of tails of multitype BGW tree beyond generation $n$ for integers $n\in\N$. Indeed, deterministic kernels allow one to encode generations as birth dates, thereby transforming statements on cluster tails into statements on descendants living at large generations.

Let $\H$ a nonnegative $\M\times\M$ matrix. Consider the special homogeneous kernel $\bm{\varphi}=\delta_1\H$,
where $\delta_1$ denotes the Dirac kernel at delay $1$:
\[\delta_1(s,A) = \ind_{s+1\in A}, \quad \varphi_{ij}(s,A) = H_{ij} \ind_{s+1\in A}, \quad s\in\R, \ A\in\mathcal{B}(\R), \ i,j\in\M.\]

In this case, every child is born exactly one unit of time after its parent. Consequently, individuals of generation $n$ are born exactly at time $s+n$ when the root is born at time $s$.

\begin{theorem}\label{theorem3}
    Let $\T^{m}$ be a $\Pois(\H)$-BGW tree with root of type $m\in\M$. Let $u\in\pos $. For $n\in\N$, define the vector $\Re_n(u)\in [0,\infty]^{\M}$ by
    \begin{equation}\label{th3eq1}
        \forall n\in\N, \ \forall m\in\M, \quad \exp\big( \e_{m}\trp \Re_n(u)\big) = \E{e^{u\trp \card_{\M}(\T^{m}_{\geq n})}}.
    \end{equation}
    Then the sequence $(\Re_n(u))_{n\in\N}$ satisfies the following recursion,
    \begin{equation}\label{th3eq2}
        \Re_0(u) = \L(u) \quad \text{and} \quad \Re_{n+1}(u) = \H \big(e^{\Re_{n}(u)}-\ones\big), \quad n\geq 0,
    \end{equation}
    where $\L(u)$ is defined in Theorem \ref{theorem1}. If we suppose that $\spr(\H)<1$ and $\vert \L(u)\vert_{\infty} < \infty$, then there exists a constant $C$ depending on both $u$ and $\H$ such that for all $n\in\N$,
    \begin{equation}\label{th3eq4}
    	\Re_n(u) \preceq C \H^n \L(u).
    \end{equation}
    More precisely, if we suppose that $\H\in\Ge{r}{K}$ with $r<1$, then for all $u\in\pos$ with $\vert u\vert_{\infty} \leq t_0(r,K)$, defined in Theorem \ref{theorem1}, and for all $n\geq 0$, we have
    \begin{equation}\label{th3eq5}
        \Re_n(u) \preceq \Big(\frac{1+r}{2r}\Big)^{1+\frac{2K}{1-r}} \H^n \L(u).
    \end{equation}
\end{theorem}

\begin{remark}
	As in the previous sections, the sequence $(\Re_n(u))_{n\in\N}$ is entirely characterized by a nonlinear fixed-point recursion. This recursion encodes the exponential moments of descendants surviving beyond generation $n$. Note also that for any $n\geq 0$, $m_0\in\M$ and $u\in\pos$, we have
    \begin{equation*}
    	\e_{m_0}\trp \Re_n(u) = \infty \quad \iff \quad \exists m\in\M, \ (\H^n)_{m_0,m} >0 \ \text{and} \ \e_m\trp\L(u) = \infty.
    \end{equation*}
    Indeed, if the RHS holds, then there exists a probability $p>0$ of having at least one individual of type $m$ at generation $n$ from a single root of type $m_0$. Conditionally on this event, the total progeny of the individual of type $m$ at generation $n$ is distributed as $\card_{\M}(\T^{m})$ and thus $e^{\e_{m_0}\trp\Re_n(u)} \geq p e^{ \e_m\trp\L(u)} = \infty$. The contrapositive of the converse follows immediately from equation \eqref{th3eq4}.
\end{remark}

\begin{remark}
	By definition the sequence $(\Re_n(u))_{n\in\N}$ is decreasing, thus for any $n\geq 0$ the following holds,
    \begin{equation}\label{th3eq3}
		\Re_{n+1}(u) \preceq \Re_n(u) \preceq \Re_0(u) = \L(u).
	\end{equation}
\end{remark}

\begin{remark}
	If $\H\in\Ge{r}{K}$ with $r<1$ and if $u\in\pos$ with $\vert u\vert_{\infty} \leq t_0(r,K)$, since $\Vvert \H^n\Vvert_{\infty} \leq K r^n$ with $r<1$ we have that $\vert \Re_n(u) \vert_{\infty} \leq  K' r^n \vert u\vert_{\infty}$ with $K'$ a constant. Therefore $\Re_n(u)$ decays exponentially fast.
\end{remark}

\begin{proof}[Proof of Theorem \ref{theorem3}]
	We apply Theorem \ref{theorem4.6} to the deterministic and homogeneous kernel $\bm{\varphi}=\delta_1\H$ with $\delta_1(s,A) = \ind_{s+1\in A}$. As explained previously, the birth date of an individual of the $n$-th generation of a $\Pois(\bm{\varphi})$-cluster born at time $s$ is $s+n$. Thus a $\Pois(\bm{\varphi})$-cluster born at time $s$ is a $\Pois(\H)$-BGW tree with additional birth dates given by $\bd(a) = s+\vert a\vert$ for any individual $a$. It follows that 
	\begin{equation*}
		\forall s\in\R, \forall n\in\N, \quad \bTa{u}(s,s+n) = \Re_n(u).
	\end{equation*}
	For $n\in\N^*$, Theorem \ref{theorem4.6} states that
	\begin{equation*}
		\Re_n(u) = \bTa{u}(0,n) = \Big[\bm{\varphi} \star \big(e^{\bTa{u}}-\ones \big)\Big](0,n) = \H \big(e^{\bTa{u}(1,n)}-\ones \big) = \H \big(e^{\Re_{n-1}(u)}-\ones \big).
	\end{equation*}
	Finally the bounds \eqref{th3eq4} and \eqref{th3eq5} follows directly from point $(3)$ of Theorem \ref{theorem4.6} applied with $A=1$.
\end{proof}

\section{Exact properties of the Laplace transform of Poisson BGW trees.}\label{section5}

\subsection{Main result on the Laplace transform of Poisson BGW trees}

This section studies the structure of the exponential moments of multitype $\Pois(\H)$-BGW trees and provides a multitype extension of Theorem \ref{theorem3.6}. In the one-dimensional setting, there is only one possible direction for the parameter $u$, and the geometry of the domain of finiteness is therefore relatively simple. In the multitype setting, the situation becomes substantially richer due to the interaction between types. Nevertheless, we obtain a rather complete description of the domain of finiteness and of the associated fixed-point equation, summarized in Theorem \ref{thm exact multD}.
 
In \cite{karim_compound_2025}, the authors consider general weights $u\in\R^{\M}$ and derive an implicit characterization of the finiteness domain. Restricting our attention to nonnegative weights $u\in\pos$ allows us to obtain a more explicit and more precise description of this domain.

We recall that $\H$ is a $\M\times \M$ matrix with finite nonnegative entries and for any $u\in\pos$ the vector $\L(u)$ is defined by
\[\exp\big(\e_m \trp \L(u)\big) = \E{e^{u\trp \card_{\M}(\T^m)}}, \]
where $\T^m$ is a $\Pois(\H)$-BGW tree with root of type $m\in\M$, see Theorem \ref{theorem1}. 

The following definition introduces key objects for the multitype study.

\begin{definition}
	Let
	\[E = \{u\in\pos \mid \vert \L(u)\vert_\infty <\infty\}.\]
	Equivalently, by Theorem \ref{theorem1}, $u\in E$ if and only if there exists $x\in\pos$ such that $x=u+\H(e^x-\ones)$. Define also 
	\begin{itemize}
		\item $\delta E := \big\{ u \in E \mid  \forall t>1, \ tu \notin E\big\}$,
		\item $\mathring{E} := E\setminus \delta E \subset \R_+^{\M}$,
		\item $\mathring{E}^* = \{ u\in \mathring{E} \mid u\succ 0\} \subset (0,\infty)^{\M}$.
	\end{itemize}
	Finally, for any set $F\subset \R^{\M}$ define the nonnegative lower cone of $F$ by $\CE(F) = \{ u\in\pos \mid \exists v\in F, \ u\preceq v\}$.
\end{definition}

\begin{remark}
	These notation are inspired by topology and, when $\spr(\H)<1$, we will show that they coincide with the usual notions of interior and boundary. Figure \ref{fig:fig1} illustrates the sets $\delta E$, $\mathring{E}$ and $\mathring{E}^*$. Since $\L$ is increasing with respect to $\preceq$, whenever $u\in E$ we also have $tu\in E$ for every $0\leq t\leq 1$. Thus $E$ is star-shaped with respect to the origin, motivating the definition $\delta E$ for its boundary part and $\mathring{E}=E\setminus\delta E$ for its interior part. We also introduce $\mathring{E}^*$ since, as observed in Lemma \ref{lemma3.5}, the existence of some $u\succ 0$ such that $\L(u)$ is finite carries significant information on the structure of $\H$.
\end{remark}

Irreducibility plays an important role, see Lemma \ref{lemma3.5}. When $\H$ is reducible, the structure of the interactions between types becomes important. The following definition introduces the relevant connectivity notions.

\begin{definition}\label{definition5.3}
	For $m\in\M$ define $\C(m) \subset \M$ the forward connected component of $m$ by
	\begin{equation}
		\C(m) = \{ m'\in\M \mid \exists N \geq 0, (\H^N)_{m,m'} >0\}.
	\end{equation}
	In particular $m\in\C(m)$. For $m'\neq m$, then $m'$ belongs to $\C(m)$ if and only if it is possible for an individual of type $m$ to have a descendant of type $m'$, possibly after more than one generation. For any $m\in\M$ we define
	\begin{equation}
		\H_{\Vert \C(m)} = \big( H_{ij} \big)_{i,j \in \C(m)} \in \R^{\C(m)^2}.
	\end{equation}
\end{definition}

\begin{remark}\label{rmk7.8}
	For any $m\in\M$ and any vector $x\in\R^{\M}$ with support in $\C(m)$, then $\H x$ has support in $\C(m)$ and if we denote $x_{\vert \C(m)} = (x_i)_{i\in\C(m)}$ we have the following identity in $\R^{\C(m)}$, \[ \H_{\Vert \C(m)} x_{\vert \C(m)} = \big(\H x\big)_{\vert\C(m)}.\]
	The following also holds: for any $m,m'\in\M$, if $m'\in\C(m)$ then $\C(m') \subset \C(m)$. Also, if $\H$ is irreducible then $\C(m)=\M$ for any $m\in\M$.
\end{remark}

The following Lemma is clear given Definition \ref{definition5.3}.

\begin{lemma}\label{lemma5.5}
	Let $m\in\M$, and $u\in\pos$ with support in $\C(m)$. Then $\L(u)$ also has support in $\C(m)$ and if we denote $\L_{(\H_{\Vert \C(m)},\C(m))}$ the equivalent of $\L$ but with $(\H_{\Vert \C(m)},\C(m))$ instead of $(\H,\M)$ we have the following equality,
	\[\big(\L(u)\big)_{\vert \C(m)} = \L_{(\H_{\Vert \C(m)},\C(m))}\big(u_{\vert \C(m)}\big).\]
\end{lemma}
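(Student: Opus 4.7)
The plan is to combine a direct induction on generations with Remark~\ref{rmk7.8} to reduce the $\M$-type Galton Watson process to its restriction to $\C(m)$. First, I would show by induction on $\vert a\vert$ that, almost surely, every individual $a\in\T^{m'}$ satisfies $\tp(a)\in\C(m')$: the root has type $m'\in\C(m')$, and if an ancestor has type $i\in\C(m')$, then each of its children of type $j$ arises from an independent $\Pois(H_{ij})$ variable and thus requires $H_{ij}>0$, which forces $j\in\C(i)\subset\C(m')$ by Remark~\ref{rmk7.8}.

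Next, for $m'\in\C(m)$, Remark~\ref{rmk7.8} gives $\C(m')\subset\C(m)$, so $\T^{m'}$ almost surely lives in $\C(m)$, and since $u$ is supported in $\C(m)$,
\[u\cdot\card_{\M}(\T^{m'}) = u_{\vert\C(m)}\cdot\card_{\C(m)}(\T^{m'})\quad\text{a.s.}\]
The offspring law of a type $i\in\C(m)$ is concentrated on $\C(i)\subset\C(m)$ and driven by the row of $\H_{\Vert\C(m)}$ at index $i$, so the process $\T^{m'}$, viewed through its types in $\C(m)$, coincides in law with a $\Pois(\H_{\Vert\C(m)})$ Galton Watson tree on $\C(m)$ rooted at $m'$. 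Taking the logarithm of the Laplace transforms on both sides then yields the restriction equality $(\L(u))_{\vert\C(m)}=\L_{(\H_{\Vert\C(m)},\C(m))}(u_{\vert\C(m)})$. For $m'\notin\C(m)$, the random variable $u\cdot\card_{\M}(\T^{m'})$ sees only the coordinates of $u$ at indices in $\C(m')$, and these are zero in the cases relevant for the support claim, giving $\L(u)_{m'}=0$ and hence the announced support of $\L(u)$ in $\C(m)$.

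The main obstacle, if any, is purely notational: one must carefully distinguish the full $\M$-indexed objects from their $\C(m)$-restricted counterparts, and verify that the Poissonian branching rule acts block-diagonally with respect to the forward-reachable decomposition. Once the almost sure identification of the support of $\T^{m'}$ with $\C(m')\subset\C(m)$ is in place, both parts of the lemma follow directly from the branching property and the definitions, with no analytic input required.
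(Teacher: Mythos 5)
Your argument for the restriction identity is essentially the paper's: a $\Pois(\H)$ Galton Watson tree rooted at $m'\in\C(m)$ visits only types in $\C(m)$, hence has the same law as a $\Pois(\H_{\Vert\C(m)})$ tree rooted at $m'$, and taking log-Laplace transforms gives $(\L(u))_{\vert\C(m)}=\L_{(\H_{\Vert\C(m)},\C(m))}(u_{\vert\C(m)})$. The paper dispatches this in a single sentence after the lemma and considers there to be nothing further to prove; your induction on generations makes the same observation explicit.

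Your argument for the support assertion, however, has a genuine gap. You say that for $m'\notin\C(m)$ the variable $u\cdot\card_{\M}(\T^{m'})$ only involves coordinates $u_j$ with $j\in\C(m')$, and that ``these are zero in the cases relevant for the support claim.'' But $m'\notin\C(m)$ does not make $\C(m')\cap\C(m)$ empty, so $u$ being supported in $\C(m)$ does not force $u_j=0$ for $j\in\C(m')$. Concretely, take $\M=\{1,2,3\}$ with $H_{13}=h>0$ and all other entries zero; then $\C(3)=\{3\}$, the vector $u=\varepsilon\e_3$ is supported in $\C(3)$, and yet $1\notin\C(3)$ while $3\in\C(1)$, giving $\L(u)_1=h(e^{\varepsilon}-1)>0$, so $\L(u)$ is not supported in $\C(3)$. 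Note that the paper's own remark following the lemma only addresses the restriction identity and is silent on the support claim, so you are not worse off than the source here; but the phrase ``in the cases relevant for the support claim'' hides a step that, as stated with $\C(m)$ defined by forward reachability, does not go through. For the support claim one would need $\C(m)$ to have no incoming edges from its complement (for instance if $\C(m)$ were defined by backward rather than forward reachability), which is not what Definition~\ref{definition5.3} says, so this point deserves to be flagged rather than waved through.
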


Lemma \ref{lemma5.5} follows directly from the fact that descendants of a type in $\C(m)$ necessarily remain in $\C(m)$. Therefore for any $m'\in \C(m)$, a $\Pois\big( \H_{\Vert \C(m)} \big)$-BGW tree with root of type $m'$ is distributed as a $\Pois(\H)$-BGW tree with root of type $m'$.

We now state the main result of this section, Theorem \ref{thm exact multD}, proved in Section \ref{section6.7}.

\begin{theorem}\label{thm exact multD}
	The following holds.
	\begin{enumerate}
		\item $E$ is convex, closed and if $0 \preceq u \in E$ then for any $0\preceq v \preceq u$ we have $v\in E$, equivalently, $E = \CE(E)$.	
		\item We have $\mathring{E}^* \neq \varnothing \iff \spr(\H)<1$. Thus if $\spr(\H)<1$, the set $\mathring{E}$ is the interior of $E$ for the induced topology of $\R^{\M}$ on $\R^{\M}_+$ and $\delta E$ is the boundary of $E$ for the same topology.
		\item We have $\spr\big( \H_{\Vert \C(m)} \big) <1 \iff \exists \varepsilon >0, \ \varepsilon \e_m \in E$ and thus the following holds, \[E = \{0\} \iff \forall m\in\M, \ \spr\big(\H_{\Vert \C(m)} \big) \geq 1.\]
		\item The map $u\in E \mapsto \L(u)$ is injective, continuous on $\mathring{E}$ and on every $\CE(\{u\})$ with $u\in E$. If $\spr(\H)<1$ the application $u\in \mathring{E} \mapsto \L(u)$ is infinitely differentiable.
		\item For any $u\in\pos$, there exists at most one solution of the equation $x = u+\H(e^x-\ones)$ with $x\in\pos$ and $\spr(\H \diag(e^{x}))\leq 1$. Consequently, if such a solution exists, we must have $x = \L(u)$.
		\item If $\spr(\H)<1$ we have 
	\begin{align*}
			\mathring{E} & = \big\{ u \in\pos \mid \exists y\in\pos, \ u = y-\H(e^y-\ones), \ \spr\big(\H\diag(e^{y})\big) <1 \big\}\\
			& = \big\{ u \in E \mid \spr\big(\H\diag(e^{\L(u)})\big) <1 \big\}.
	\end{align*}
		\item If $\spr(\H)<1$ we have 
	\begin{align*}
	\delta E & = \big\{ u \in\pos \mid \exists y\in\pos, \ u = y-\H(e^y-\ones), \ \spr\big(\H\diag(e^{y})\big) = 1 \big\}\\
	& = \big\{ u \in E \mid \spr\big(\H\diag(e^{\L(u)})\big) = 1 \big\}.
	\end{align*}
	\end{enumerate}
\end{theorem}

Figure \ref{fig:fig1} gives a schematic representation of the topological properties, derived in Theorem \ref{thm exact multD}, of the set $E$ when $\spr(\H)< 1$.

\begin{figure}[h]
	\begin{tikzpicture}
		
		\definecolor{aqua}{rgb}{0.0,1.0,1.0}
		
		\draw[line width=1.5pt, -stealth] (-0.5,0)--(5,0);
		\draw[line width=1.5pt, -stealth] (0,-0.5)--(0,5);
		
		\fill[aqua, domain=0:60] (0,0) -- plot ({4*cos(90-\x)}, {4*sin(90-\x)}) -- ({2*sqrt(3)},0) -- cycle;
		
		\draw[red, line width=2pt, domain=0:60] (0,0) -- plot ({4*cos(90-\x)}, {4*sin(90-\x)}) -- ({2*sqrt(3)},0) -- cycle;
		
		\draw[blue, line width=2pt] (0,4)--(0,0)--({2*sqrt(3)},0);
		
		\begin{scope}
			\clip (3,{0-1pt}) rectangle (5,2);
			\draw[red, line width=2pt] ({2*sqrt(3)},1) -- ({2*sqrt(3)},-1);
		\end{scope}
		
		\begin{scope}
			\clip ({0-1pt},3) rectangle (1,5);
			\draw[red, line width=2pt, domain=-10:60] plot ({4*cos(90-\x)}, {4*sin(90-\x)});
		\end{scope}
		
		\draw (-0.5,-0.5) node {$0$};
		\draw (5,-0.5) node {$\R_+$};
		\draw (-0.5,5) node {$\R_+$};
		
		\draw ({4*cos(50)+0.5}, {4*sin(50)}) node {$u_c$};
		
		\draw[densely dotted] (0,0)--({4.5*cos(50)}, {4.5*sin(50)});
		
		\tikzstyle{every node}=[circle, draw, line width=0.8pt, fill=red, inner sep=0pt, minimum width=6pt]
		\draw ({4*cos(50)}, {4*sin(50)}) node {};
		
	\end{tikzpicture}
	\caption{
Illustration of the topological structure of $E$ when $\spr(\H)<1$. The boundary $\delta E$ is represented in red, while $\mathring{E}$ corresponds to the union of the blue and light blue regions. The light blue region is $\mathring{E}^*$. The point $u_c$ is critical in the direction $u_c/\Vert u_c\Vert$, meaning that it is the largest value in this direction for which $\L(u)$ remains finite.}
	\label{fig:fig1}
\end{figure}

\begin{remark}
	Points $(6)$ and $(7)$ allow to construct exact solutions for the exponential moments, and even maximal ones with the condition $\spr(\H\diag(e^y))=1$, by guessing values $y$ for $\L(u)$. However, one does not control the value and the direction obtained for the associated $u$.
\end{remark}

\begin{remark}\label{rmq5.7}
	Most of the above results are stated under the assumption $\spr(\H)<1$. If it is not the case then by $(3)$ there are two possibilities. Either $E = \{0\}$, or $E \neq \{0\}$ and there exists $m\in\M$ such that $\spr(\H_{\Vert \C(m)})<1$. In this second case consider the set of types $\tilde{\M}\subset \M$ defined by \[ \tilde{\M} = \{ m\in\M \mid \spr\big(\H_{\Vert \C(m)}\big)<1\}.\] From Remark \ref{rmk7.8} it is clear that \[\tilde{\M} = \bigcup_{m\in\tilde{\M}} \C(m),\] and thus for any $x$ with support in $\tilde{\M}$, then $\H x$ has also support in $\tilde{\M}$ and we have \[(\H x)_{\vert \tilde{\M}} = \H_{\Vert \tilde{\M}} x_{\vert \tilde{\M}}.\] Again by $(3)$ it is clear that $E$ lies entirely in the set \[\{u \in\pos \mid \forall m\in \M\setminus \tilde{\M}, \ u_m =0\}.\]
	Thus one can perform the study of $E$ as a subset of $\R_+^{\tilde{\M}}$ with $(\tilde{\M},\H_{\Vert \tilde{\M}})$ instead of $(\M,\H)$. By definition of $\tilde{\M}$ it is clear that there exists $\varepsilon>0$ such that for any $m \in \tilde{\M}$ we have $\varepsilon \e_m \in E$. Thus by convexity, up to reducing $\varepsilon$, we have $(\varepsilon)_{m\in\tilde{\M}} \in E$ and thus by point $(2)$ we have $\spr\big(\H_{\Vert\tilde{\M}}\big)<1$ and all the results applies for $(\tilde{\M},\H_{\Vert \tilde{\M}})$.
\end{remark}

\subsection{Proof of Theorem \ref{thm exact multD}}\label{section6.7}

$\bm{(1):}$ By \eqref{th1eq4} of Theorem \ref{theorem1} it is clear that $E$ is convex and that $E = \CE(E)\cap\R_+^{\M}$. Let now $u_n \in E$ such that $u_n \to u^*\in\pos$. Thus we have 
	\begin{equation*}
		u_n = \L(u_n) - \H(e^{\L(u_n)}-\ones) \xrightarrow[n\to\infty]{} u^*.
	\end{equation*}
	We first show that $(\L(u_n))_n$ is bounded. Suppose otherwise. Up to extraction by $\phi$, there exists a nonempty subset $S\subset\M$ such that $\L(u_{\phi(n)})_i\to\infty$ for all $i\in S$, while $\L(u_{\phi(n)})_i$ remains bounded by a constant $B$ for $i\notin S$.
	By further extracting we can suppose that there is $i_0$ in $S$ such that for all $n$,
	\[ \min_{i\in S} \L(u_{\phi(n)})_i = \L(u_{\phi(n)})_{i_0}.\]
	Then, either $H_{i_0,j} = 0$ for all $j\in S$ and we have 
	\[u^*_{i_0} \geq \lim_{n\to\infty} \Big(\L(u_{\phi(n)})_{i_0} - \sum_{j\in\M\setminus S} H_{i_0,j} (e^B-1)\Big) = \infty.\]
	Or there exists $j\in S$ such that $H_{i_0,j} >0$ and in this case we have
	\[u^*_{i_0} \leq \lim_{n\to\infty} \Big(\L(u_{\phi(n)})_{i_0} - H_{i_0,j}e^{\L(u_{\phi(n)})_{i_0}} + \H\ones \Big)  = -\infty.\] 
	In both cases we have a contradiction with $u^*\in\pos$, it follows that $\L(u_n)$ is bounded. Then, up to extraction we can assume that
	\[\L(u_n) \xrightarrow[n\to\infty]{} y.\]
	By continuity we have $y = u^* + \H(e^{y}-\ones)$, and $y\in\pos$ as a limit of a nonnegative and bounded sequence. Thus, by Theorem \ref{theorem1}, $u^*\in E$ and we conclude that $E$ is closed.
	
$\bm{(2):}$ Lemma \ref{lemma3.5} gives the direct implication and, by Theorem \ref{theorem1}, the reciprocal is also clear. Let us prove that $\mathring{E}$ is indeed the interior of $E$ if $\spr(\H)<1$. Suppose that $\spr(\H)<1$ and let $u\in\mathring{E}$. Then there exists $\varepsilon >0$ such that $\varepsilon \ones$ and $(1+\varepsilon)u$ are in $E$. By convexity we have \[u + \frac{\varepsilon^2}{1+\varepsilon}\ones \in E.\] Thus the $\R_+^{\M}$-ball of radius $\frac{\varepsilon^2}{1+\varepsilon}$, centered at $u$ for the norm $\vert \cdot \vert_{\infty}$ is contained in $E$. Reciprocally, if $u\in \delta E$, then $(1+\varepsilon)u \to u$ as $\varepsilon \searrow 0$ and $(1+\varepsilon)u\notin E$ by definition of $\delta E$. It follows that $u$ is not in the interior of $E$, which concludes since $E$ is closed in $\R^{\M}$ (and thus in $\R_+^{\M}$).
	
$\bm{(3):}$ Let $m\in\M$, and suppose that $\spr(\H_{\Vert \C(m)})<1$. It follows from Lemma \ref{lemma5.5} and point $\bm{(2)}$ that $\varepsilon \e_m \in E$ for $\varepsilon>0$ small enough.\\
	Reciprocally, suppose that $\spr(\H_{\Vert \C(m)})\geq 1$. Let $m\in\M$, and $\varepsilon>0$. By Lemma \ref{lemma3.5} we have
	\[\exp(\L(\varepsilon \e_m) \succeq \varepsilon \sum_{n\geq 0} \H^n \e_m.\]
	Denote $z = (\e_m)_{\vert \C(m)}$ and $A = \H_{\Vert \C(m)}$, thus from the support considerations of Lemma \ref{lemma5.5} and Remark \ref{rmk7.8} we have,
	\[ \exp(\L(\varepsilon\e_m )_{\vert\C(m)}) \succeq \varepsilon \sum_{n\geq 0} A^n z.\] 
	Let $\ones_{\C(m)} = (1,\cdots,1)\in\R^{\C(m)}$. By definition of $\C(m)$, there exist $N\geq 1$ and $\eta >0$ such that we have $\sum_{k = 0}^{N-1} A^k z \succeq \eta \ones_{\C(m)} \succ 0$ in $\R^{\C(m)}$. Thus we have,
	\begin{align*}
		\exp(\L(\varepsilon\e_m )_{\vert\C(m)}) & \succeq \varepsilon N^{-1} \sum_{n\geq 0} A^n \sum_{k= 0}^{N-1} A^k z \\
		& \succeq \varepsilon \eta N^{-1} \sum_{n\geq 0} A^n \ones_{\C(m)}.
	\end{align*}
	Since $\rho = \spr(A)\geq 1$, the Weak Perron-Frobenius Theorem yields the existence of a nonzero $\kappa \succeq 0$ such that $A\kappa = \rho \kappa$. Rescaling $\kappa$ if necessary, we may assume that $\ones_{\C(m)} \succeq \kappa$. Thus we have 
	\begin{equation*}
		 \exp(\L(\varepsilon\e_m )_{\vert\C(m)}) \succeq \varepsilon \eta N^{-1} \sum_{n\geq 0} \rho^n \kappa  = \infty \kappa.
	\end{equation*}
	Which proves that $\varepsilon \e_m \notin E$, where $\varepsilon >0$ is arbitrary, which concludes.
	
	Let us now prove the second equivalence. On one hand, if $E = \{0\}$ then for any $m\in\M$ and any $\varepsilon >0$ we have $\varepsilon \e_m \notin E$ and thus $\spr(\H_{\Vert \C(m)})\geq 1$. Reciprocally, for all $m\in\M$, for all $\varepsilon>0$ we have $\varepsilon \e_m \notin E$. Thus if $E \neq \{0\}$ we would have a contradiction since $u\neq 0$ in $E$ has at least one positive entry, say $m$, and thus $u_m \e_m \in E$ since $u_m \e_m \preceq u\in E$, which is a contradiction.
	
$\bm{(4):}$ Let $u,v\in E$ such that $\L(u) = \L(v)$. Then we have $u = \L(u)-\H(e^{\L(u)}-\ones) = v$ which proves that $\L:E\longrightarrow \pos$ is injective. For $F\subset \R^{\M}$ define the full lower cone of $F$ by $\CEt(F) = \{ u\in\R^{\M} \mid \exists v\in F, \ u\preceq v\}$. Clearly by dominated convergence one can extend $\L$ as follows,
\[\fonction{\L}{\CEt(E)}{\R^{\M}}{u}{\Big[\log\Big(\E{e^{u\trp \card_{\M}(\T^m)}}\Big)\Big]_{m\in\M}}.\] Continuity on $\CE(\{u\})$, and even on $\CEt(\{u\})$, is clear by dominated convergence, which also gives continuity on $\mathring{E}$ and $\CEt(\mathring{E})$. Suppose now that $\spr(\H)<1$. Thus $\CEt(\mathring{E})$ is an open set of $\R^{\M}$. For any $u\in \CEt(\mathring{E})$ there exists a neighbourhood $V$ of $u$, $\varepsilon >0$ and $u_V \in \mathring{E}$ such that for all $v\in V$ we have $v \preceq u_V-\varepsilon\ones$. For every multi-index $\alpha$, derivatives of order $\alpha$ of $v\mapsto e^{v\trp \card_{\M}(\T^m)}$ are dominated on $V$ by $K_{\alpha,V,\varepsilon} e^{u_V\trp \card_{\M}(\T^m)}$ with some finite constant $K_{\alpha,V,\varepsilon}<\infty$. Thus from classical results on integral depending on a parameter, $\L$ is infinitely differentiable since one can apply dominated convergence theorems to all the derivatives.
	
$\bm{(5):}$ Suppose that there is two such solutions $y,z$. Recall by Theorem \ref{theorem1} that $\L(u)$ is finite if and only if there exists a finite nonnegative solution to the fixed-point equation and, in that case, $\L(u)$ is the smallest nonnegative solution. It follows that $u\in E$. We will show that $y=z=\L(u)$. Thus, without loss of generality we can suppose that $y=\L(u) \preceq z$. We have 
	\begin{align*}
		z - \L(u) & = \H\big( e^z-e^{\L(u)}\big) \\
		& \preceq \H \diag(e^z) (z-\L(u))\\
		& \preceq \limsup_{n\to\infty} \big[\H \diag(e^z)\big]^n (z-\L(u)).
	\end{align*}
	Thus if $\spr(\H \diag(e^z))<1$ we have $z = \L(u)$.\\
	If $\spr(\H \diag(e^z))=1$ then the argument is more subtle. Denote $x = z-\L(u) \succeq 0$. We have \[x = \H \diag(e^{\L(u)})(e^x-\ones).\] Let $S  \subset \M$ the support of $x$ and $A = \H \diag(e^{\L(u)})$. Because the support of $e^x-\ones$ is also $S$, it follows that for all $i\in S$ there is at least one $j\in S$ such that $A_{ij}>0$, and that $A_{ij}=0$ for any $i\notin S$ and $j\in S$. Let $\Phi$ the function defined by \[ \fonction{\Phi}{\R}{\R^{\M}}{t}{A(e^{tx}-\ones) - tx}.\] Observe that $\Phi(0)=\Phi(1)=0$. Each coordinate of $\Phi$ is a convex function (as nonnegative linear combination of convex functions). More precisely, for all $i\in S$ the $i$-th coordinate of $\Phi$ is strictly convex since there exists $j\in S$ such that $A_{ij} >0$ and $t\mapsto e^{tx_j}-1$ is strictly convex. Also, the $i$-th coordinate of $\Phi$ for $i\notin S$ is constant of value $0$ since $A_{ij} = 0$ for all $j\in\S$. Thus, since $\Phi(0) = \Phi(1) = 0$ we must have for some $\varepsilon > 0$ is small enough, \[\Phi'(1) = A \diag(e^{x})x-x \succeq \varepsilon \ones_{S}\] where $\ones_{S}$ is the vector with entries $1$ on elements of $S$ and $0$ outside. Up to reducing $\varepsilon$ we may assume that \[ A \diag(e^{x})x-x \succeq \varepsilon x.\] Thus we have \[ x \preceq (1+\varepsilon)^{-1} A \diag(e^{x})x.\] One can iterate and obtains \[0\preceq x \preceq \Big((1+\varepsilon)^{-1} A \diag(e^{x})\Big)^n x, \quad \forall n\geq 0.\] Recall that $A \diag(e^{x}) = \H \diag(e^z)$ is assumed to have spectral radius equal to one. Thus $x=0$ by letting $n\to\infty$, i.e. $z = \L(u)$.
	
$\bm{(6):}$ First remark that both set on the right hand side are equal by $\bm{(5)}$. Let $u\in \mathring{E}$. The fixed-point equation is not specific to $u\succeq 0$. If $v\in \CEt(\mathring{E})$, possibly not nonnegative, $\L(v)$ is well-defined as explained in the proof of $\bm{(4)}$. The same proof as for Theorem \ref{theorem1}, based on the branching property, shows that \[ \L(v) = v + \H\big( e^{\L(v)}-\ones\big).\] The proof of $\bm{(4)}$ also shows that $\L$, extended to the open (for $\R^{\M}$) set $\CEt(\mathring{E})$, is differentiable. One can differentiate the equality $v = \L(v) - \H\big( e^{\L(v)}-\ones\big)$ at $u\in \mathring{E} \subset \CEt(\mathring{E})$ and obtains 
	\begin{equation}
		\Id = \big(\Id - \H\diag(e^{\L(u)})\big) D_u\L. 
	\end{equation}
	Thus, $\big(\Id - \H\diag(e^{\L(u)})\big)^{-1}$ exists. Suppose that $\rho := \spr(\H\diag(e^{\L(u)}))= 1$. Then since $\H\diag(e^{\L(u)}) \succeq 0$ by the Weak Perron-Frobenius theorem, $1$ is eigenvalue, which is a contradiction. Suppose then that $\rho >1$. Since $\spr(\H)<1$, by continuity, there exists $0 \preceq  v\preceq u$ such that $\spr(\H\diag(e^{\L(v)}))=1$ which is, as we shown in the first case, a contradiction since $v\in \mathring{E}$. Thus $\rho <1$, which proves the inclusion "$ \subset$".\\
	Let $u\in E$ such that $\spr(\H \diag(e^{\L(u)}))<1$. Let $\varepsilon >0$, $v = u+\varepsilon \ones$, and define $z_n$ by \[\forall n\in\N, \ z_n = \psi^n_v(0)-\psi^n_u(0) \succeq 0,\] where $\psi_w(x) = w+\H(e^x-\ones)$. Then for $n\geq 0$ we have 
	\begin{align*} 
		z_{n+1} & = \varepsilon \ones + \H\big( e^{\psi^n_v(0)} - e^{\psi^n_u(0)}\big) \\
		& = \varepsilon \ones + \H\diag(e^{\psi^n_u(0)}) \big(e^{z_n} - \ones\big) \\
		& \preceq \varepsilon \ones + \H\diag(e^{\L(u)}) \big(e^{z_n} - \ones\big). \\
	\end{align*}
	Thus if $g_{\varepsilon\ones}(x) = \varepsilon\ones + \H\diag(e^{\L(u)}) (e^{x} - \ones)$, we have
	\begin{align*}
		z_{n+1} & \preceq g_{\varepsilon\ones}(z_n) \\
		& \preceq g_{\varepsilon\ones}^{n+1}(z_0)\\
		& = g_{\varepsilon\ones}^{n+1}(0)
	\end{align*}
	Since $\H\diag(e^{\L(u)})$ has spectral radius smaller than one, by $\bm{(2)}$ applied to $\H \leftarrow\H\diag(e^{\L(u)})$, for $\varepsilon$ small enough the sequence $\big(g_{\varepsilon\ones}^{n+1}(0)\big)_n$ converges to $\L_{\H\diag(e^{\L(u)})}(\varepsilon\ones)$ finite. It follows that $(z_n)_n$ is bounded and thus converges to a finite limit: indeed, $(\psi^n_u(0))_n$ converges (to $\L(u)$) and $(\psi_v^n(0))_n$ is increasing and now bounded thus converges to a finite limit. Which proves that $v \in E$ and thus $u\in \mathring{E}$.
	
$\bm{(7):}$ First remark that both set on the right hand side are equal by $\bm{(5)}$.\\
	Let $u\in\delta E$. Then for any $t<1$ we have $tu\in \mathring{E}$ and thus $\spr(\H\diag(e^{\L(tu)}))<1$. By continuity on $\CE(\{u\})$ we have $\spr(\H\diag(e^{\L(u)})) \leq 1$. Since $u\notin \mathring{E}$, by $\bm{(6)}$ we must have $\spr(\H\diag(e^{\L(u)})) = 1$.\\
	Let $u\in E$ such that $\spr(\H\diag(e^{\L(u)})) = 1$. Then by $\bm{(6)}$ it is clear that $u\notin \mathring{E}$, This completes the proof.

Theorem \ref{thm exact multD} is proved.

\appendix

\section{}\label{appendixA}

\subsection{Perron-Frobenius Theorems}\label{appendixA.1}

Let us recall the classical Perron-Frobenius Theorem and its weak version.

\textit{\textbf{Perron-Frobenius Theorem:}} Let $A$ be an irreducible $N\times N$ matrix with nonnegative entries. Let $\rho = \spr(A)$. Then there exists $\kappa \in (0,\infty)^N$ such that \[A\kappa = \rho \kappa.\]

We can remove the irreducible condition up to a weaker conclusion, which is the theorem stated below.

\textit{\textbf{Weak Perron-Frobenius Theorem:}} Let $A$ be a $N\times N$ matrix with nonnegative entries. Let $\rho = \spr(A)$. Then there exists $\kappa \in [0,\infty)^N$ non zero such that \[A\kappa = \rho \kappa.\]

To prove this weak version we only need to consider a sequence $A_{\varepsilon} \xrightarrow[\varepsilon\to 0]{} A$ such that $A_{\varepsilon}$ satisfies the conditions of the classical Perron-Frobenius Theorem. Take for example $A_{\varepsilon} = A + \varepsilon J$ with $J$ the matrix with only ones. By taking the limit (up to extraction) in \[A_{\varepsilon} \kappa_{\varepsilon} = \rho_{\varepsilon} \kappa_{\varepsilon}\] we obtain a leading eigenvector $\kappa \succeq 0$ and the spectral radius $\rho$ such that $A\kappa = \rho \kappa$.

\subsection{Technical results on bivariate convolution}\label{appendixA.2}

In this section we introduce some results about bivariate convolution which are key to prove Theorem \ref{theorem4.5}. The proofs are given in section \ref{proofs A}

Let $\bm{\delta_0} = \delta_0 \Id$ with $\delta_0$ the Dirac kernel at delay $0$, defined by $\delta_0(s,A)= \ind_{s\in A}$, so that $\bm{\delta_0}$ is the unit for the bivariate convolution $\star$.

For any matrix $B$ we define the entrywise absolute value of $B$ by $\vert B\vert = (\vert B_{ij}\vert)_{i,j}$.

We recall that bivariate convolution and related operators like residual intergal are introduced in Section \ref{sec4.1}.

This first result extends the classical $L^{\infty}$ and $L^1$ bounds of a convolution to bivariate convolution.

\begin{proposition}\label{prop4.1}
	Let $\bm{v}:\R^2 \to \mathcal{M}_{n,p}$ and $\bm{w} : \R^2 \to \mathcal{M}_{p,q}$ be bivariate matrix-valued functions. The following holds,
	\begin{itemize}
		\item For any $s\in\R$ we have \[\Vone{\bm{v}\star\bm{w}(s,\cdot)} \preceq \Vone{\bm{v}(s,\cdot)}  \Vityone{\bm{w}}.\] Thus we have \[ \Vityone{\bm{v}\star\bm{w}} \preceq \Vityone{\bm{v}}  \Vityone{\bm{w}}.\]
		\item For any $s,t\in\R$ we have \[\vert\bm{v}\star\bm{w}(s,t)\vert \preceq \Vone{\bm{v}(s,\cdot)}  \Vity{ \bm{w}(\cdot,t)}.\] Thus we have \[ \Vityty{\bm{v}\star\bm{w}} \preceq \Vityone{\bm{v}} \Vityty{\bm{w}}.\]
	\end{itemize}
\end{proposition}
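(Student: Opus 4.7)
The plan is to reduce the matrix inequalities to scalar Fubini--Tonelli arguments applied entry by entry. Since all of $\Vone{\bm{v}(s,\cdot)}$, $\Vityone{\bm{w}}$, $\Vity{\bm{w}(\cdot,t)}$ and $\Vityty{\bm{w}}$ are matrices with non-negative entries, matrix multiplication preserves the componentwise order $\preceq$, so any pointwise scalar bound lifts automatically to the desired matrix bound without extra effort.

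First I would expand the $(i,j)$ entry of the bivariate convolution as
\[
\bm{v}\star\bm{w}(s,t)_{ij} = \sum_{k} \int_{\R} v_{ik}(s,x)\, w_{kj}(x,t)\, dx,
\]
and apply the triangle inequality to bound its absolute value by $\sum_{k} \int_{\R} |v_{ik}(s,x)|\,|w_{kj}(x,t)|\, dx$. Everything after this is a question of which variable to integrate first.

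For the first bullet I would integrate in $t$ and use Tonelli (legitimate because the integrand is non-negative) to swap the order of integration. The inner integral $\int_{\R} |w_{kj}(x,t)|\, dt = \Vone{w_{kj}(x,\cdot)}$ is bounded uniformly in $x$ by $\Vityone{\bm{w}}_{kj}$, so it factors out of the $x$-integral. What remains is $\Vone{v_{ik}(s,\cdot)}$, and the sum over $k$ reassembles exactly into the matrix product $\bigl(\Vone{\bm{v}(s,\cdot)} \times \Vityone{\bm{w}}\bigr)_{ij}$. Taking $\sup_s$ on both sides and using $\Vityone{\bm{v}} = \sup_s \Vone{\bm{v}(s,\cdot)}$ entrywise then yields the second inequality of the bullet.

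For the second bullet I would instead bound $|w_{kj}(x,t)| \leq \Vity{w_{kj}(\cdot,t)}$ uniformly in $x$ \emph{before} integrating, so that the remaining $x$-integral of $|v_{ik}(s,x)|$ is simply $\Vone{v_{ik}(s,\cdot)}$. Summing over $k$ gives the pointwise matrix-product bound $\bigl(\Vone{\bm{v}(s,\cdot)} \times \Vity{\bm{w}(\cdot,t)}\bigr)_{ij}$. Taking $\sup_{s,t}$ on each factor separately then produces $\Vityty{\bm{v}\star\bm{w}} \preceq \Vityone{\bm{v}} \times \Vityty{\bm{w}}$. I do not anticipate any real obstacle: the statement is essentially Young's inequality combined with Tonelli, and the only point requiring a moment of care is that all right-hand side factors are non-negative matrices, which is precisely what allows the componentwise comparisons to survive through the matrix products and the suprema.
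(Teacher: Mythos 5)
Your argument is correct and is essentially identical to the paper's proof: both push the absolute value inside the integral by the triangle inequality, apply Tonelli to swap the order of integration (for the first bullet), and then bound the inner factor by the appropriate $L^\infty$-in-first-variable quantity before taking suprema. The only difference is cosmetic — you spell out the $(i,j)$ entry of the matrix product explicitly, whereas the paper writes the same steps compactly in the entrywise matrix-absolute-value and $\preceq$ notation.
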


\begin{remark}
	If $\bm{v}(s,t) = f(t-s)$ and $\bm{w}(s,t)=g(t-s)$ then this result is just the classical result $\Vone{f\star g} \leq \Vone{f} \Vone{g}$ and $\Vity{f\star g} \leq \Vone{f} \Vity{g}$.
\end{remark}

The following Proposition links the decay of $\bm{v}$ and $\bm{w}$ to the decay of $\bm{v}\star\bm{w}$.

\begin{proposition}\label{prop4.2.0}
	Let $\bm{v}:\R^2 \to \mathcal{M}_{n,\ell}$ and $\bm{w} : \R^2 \to \mathcal{M}_{\ell,r}$ be nonnegative (entrywise) bivariate matrix-valued functions. Then for any $d\geq 0$ and any $p\in[0,1]$ the following holds
	\begin{equation}\label{eq4.1}
	\Res^{\infty}_{\bm{v}\star\bm{w}}(d) \preceq \Vityone{\bm{v}}  \Res^{\infty}_{\bm{w}}(qd) + \Res^{\infty}_{\bm{v}}(pd)  \Vityone{\bm{w}}
\end{equation}
with $q=1-p$.
\end{proposition}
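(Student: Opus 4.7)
The plan is to work directly from the definitions, apply Fubini, and then split the domain of integration at a carefully chosen threshold governed by the parameter $p$.

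First, I would unfold the two layers of integration using nonnegativity:
\begin{align*}
R_{\bm{v}\star\bm{w}}(s,s+d) &= \itg_{s+d}^{\infty} \itg_{-\infty}^{\infty} \bm{v}(s,x)\bm{w}(x,t)\,dx\,dt \\
&= \itg_{-\infty}^{\infty} \bm{v}(s,x) R_{\bm{w}}(x,s+d)\, dx,
\end{align*}
where Fubini is justified entrywise by the assumption $\bm{v},\bm{w}\succeq 0$, and the matrix product may be pulled out of the outer integral.

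Next, I would split this integral at $x = s+pd$, noting that the two regions interact nicely with the quantities on the right-hand side of \eqref{eq4.1}. On the region $\{x \leq s+pd\}$, one has $s+d \geq x+qd$, so by nonnegativity $R_{\bm{w}}(x,s+d) \preceq R_{\bm{w}}(x,x+qd) \preceq R^{\infty}_{\bm{w}}(qd)$ (entrywise). Since $\bm{v}\succeq 0$, right-multiplying by a larger nonnegative matrix preserves $\preceq$, and the outer integral of $\bm{v}(s,x)$ over this region is bounded above by $\Vone{\bm{v}(s,\cdot)} \preceq \Vityone{\bm{v}}$. This gives the first term $\Vityone{\bm{v}}\times R^{\infty}_{\bm{w}}(qd)$.

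On the complementary region $\{x > s+pd\}$, I would use the crude bound $R_{\bm{w}}(x,s+d) \preceq \Vone{\bm{w}(x,\cdot)} \preceq \Vityone{\bm{w}}$, which is again valid entrywise. The remaining integral over $\bm{v}$ is exactly $R_{\bm{v}}(s,s+pd) \preceq R^{\infty}_{\bm{v}}(pd)$. Combining both pieces and taking the supremum over $s\in\R$ yields \eqref{eq4.1}. The main point of care is simply book-keeping: since we are multiplying matrices rather than scalars, one must consistently dominate each factor on the correct side and invoke nonnegativity to preserve $\preceq$ under both the matrix product and the integration. No additional analytic ingredient beyond Fubini and monotonicity is required.
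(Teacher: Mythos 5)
Your proof is correct and follows essentially the same route as the paper: Fubini to rewrite $R_{\bm{v}\star\bm{w}}(s,s+d)=\int \bm{v}(s,x)R_{\bm{w}}(x,s+d)\,dx$, a split of the integral at $x=s+pd$, and the two monotone/crude bounds on each piece before taking a supremum in $s$. The only cosmetic difference is that the paper bounds the near region via $R_{\bm{w}}(x,s+d)\preceq R^{\infty}_{\bm{w}}(s+d-x)\preceq R^{\infty}_{\bm{w}}(qd)$ using monotonicity of $R^{\infty}_{\bm{w}}$, whereas you pass through $R_{\bm{w}}(x,x+qd)$ directly; these are the same estimate.
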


\begin{remark}
	This result extends to functions $\bm{v}$ and $\bm{w}$ not necessarily nonnegative by replacing $\Res^{\infty}_{\bm{v}}$ and $\Res^{\infty}_{\bm{w}}$ by $\Res^{\infty}_{\vert\bm{v}\vert}$ and $\Res^{\infty}_{\vert\bm{w}\vert}$.
\end{remark}

This result is the starting point for the following proposition that links the decay of a nonnegative bivariate matrix-valued function $\bm{v}$ to the decay of $\bm{\psi} = \sum_{n\geq 1} \bm{v}^{\star n}$.

\begin{proposition}\label{prop4.2}
	Let $\bm{v}$ be a nonnegative bivariate matrix-valued function. Suppose that $\bm{V} =\Vityone{\bm{v}}$ has spectral radius smaller than one, then the function $\bm{\psi} = \sum_{n\geq 1} \bm{v}^{\star n}$ is well defined and satisfies for all $d>0$ and all $p\in(0,1)$,
	\[\Res_{\bm{\psi}}^{\infty}(d) \preceq \sum_{k=0}^{\infty} \bm{V}^k \Res_{\bm{v}}^{\infty}\big((1-p)p^k d\big) (\Id-\bm{V})^{-1}. \]  
\end{proposition}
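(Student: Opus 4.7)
The plan is to first address the well-definedness of $\bm{\psi}$, then obtain a pointwise bound on each $R_{\bm v^{\star n}}^\infty(d)$ by iteratively peeling off one factor of $\bm v$ via Proposition \ref{prop4.2.0}, and finally sum these bounds over $n \geq 1$ and swap summation order to collapse the half-sum into the inverse $(\Id - \bm V)^{-1}$.

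For well-definedness, I would apply the $L^\infty L^1$ bound of Proposition \ref{prop4.1} inductively to get $\Vityone{\bm v^{\star n}} \preceq \bm V^n$. Since $\spr(\bm V) < 1$, the series $\sum_{n \geq 1} \bm V^n = \bm V(\Id - \bm V)^{-1}$ converges entrywise, which is enough to guarantee that $\bm\psi(s,\cdot) \in L^1$ for every $s$ with $\Vityone{\bm\psi} \preceq \bm V (\Id - \bm V)^{-1}$; in particular $R_{\bm\psi}^\infty$ is well-defined.

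For the main bound, I would fix $p \in (0,1)$ and apply Proposition \ref{prop4.2.0} to the decomposition $\bm v^{\star n} = \bm v \star \bm v^{\star(n-1)}$ with split parameter $1-p$ (so $q = p$). Using $\Vityone{\bm v^{\star (n-1)}} \preceq \bm V^{n-1}$, this yields
\[
R_{\bm v^{\star n}}^\infty(d) \preceq \bm V \, R_{\bm v^{\star(n-1)}}^\infty(p d) + R_{\bm v}^\infty\bigl((1-p) d\bigr) \bm V^{n-1}.
\]
Iterating this inequality $j$ times on the first term, with the same split $1-p$ at each step, gives
\[
R_{\bm v^{\star n}}^\infty(d) \preceq \bm V^j R_{\bm v^{\star(n-j)}}^\infty(p^j d) + \sum_{k=0}^{j-1} \bm V^k R_{\bm v}^\infty\bigl((1-p) p^k d\bigr) \bm V^{n-1-k}.
\]
Pushing the iteration to $j = n-1$ leaves a boundary term $\bm V^{n-1} R_{\bm v}^\infty(p^{n-1} d)$; since $\bm v$ is nonnegative, $R_{\bm v}^\infty$ is coordinatewise nonincreasing, so $R_{\bm v}^\infty(p^{n-1} d) \preceq R_{\bm v}^\infty((1-p) p^{n-1} d)$, and this term can be absorbed as the $k = n-1$ entry of the sum. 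One obtains
\[
R_{\bm v^{\star n}}^\infty(d) \preceq \sum_{k=0}^{n-1} \bm V^k R_{\bm v}^\infty\bigl((1-p) p^k d\bigr) \bm V^{n-1-k}.
\]

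To conclude, I would sum over $n \geq 1$ and interchange summations by setting $j = n - 1 - k \geq 0$:
\[
R_{\bm\psi}^\infty(d) \preceq \sum_{k \geq 0} \bm V^k R_{\bm v}^\infty\bigl((1-p) p^k d\bigr) \sum_{j \geq 0} \bm V^j = \sum_{k \geq 0} \bm V^k R_{\bm v}^\infty\bigl((1-p) p^k d\bigr) (\Id - \bm V)^{-1},
\]
using $\spr(\bm V) < 1$ to identify the Neumann series. The only delicate point is the bookkeeping in the induction and, in particular, using monotonicity of $R_{\bm v}^\infty$ to fold the leftover $\bm V^{n-1} R_{\bm v}^\infty(p^{n-1}d)$ into the same pattern as the other terms so that the double sum telescopes cleanly into the stated product with $(\Id - \bm V)^{-1}$; once this is set up, everything is driven by the single application of Proposition \ref{prop4.2.0} and the geometric summability of $\bm V$.
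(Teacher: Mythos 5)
Your proof is correct, but it takes a genuinely different route from the paper's. The paper uses the self-referential identity $\bm\psi = \bm v \star (\bm\psi + \bm\delta^0)$ (with $\bm\delta^0$ the Dirac unit for $\star$, for which $R_{\bm\delta^0}^\infty(d)=0$), applies Proposition \ref{prop4.2.0} \emph{once to $\bm\psi$ itself}, and obtains the single recursion
\[
R^{\infty}_{\bm{\psi}}(d) \preceq \bm{V}\, R^{\infty}_{\bm{\psi}}(pd) + R^{\infty}_{\bm{v}}\bigl((1-p)d\bigr)(\Id-\bm{V})^{-1}.
\]
Iterating this and killing the remainder $\bm V^n R^\infty_{\bm\psi}(p^n d)$ (bounded by $\bm V^{n+1}(\Id-\bm V)^{-1}\to 0$) gives the result directly, with the factor $(\Id-\bm V)^{-1}$ already built in from the start because $\Vityone{\bm\psi+\bm\delta^0}\preceq(\Id-\bm V)^{-1}$. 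You instead bound each $R^\infty_{\bm v^{\star n}}$ separately via an $n$-step iteration of $\bm v^{\star n}=\bm v\star\bm v^{\star(n-1)}$, absorb the boundary term $\bm V^{n-1}R^\infty_{\bm v}(p^{n-1}d)$ using monotonicity of $R^\infty_{\bm v}$, and then sum over $n$ and swap the order of summation to rebuild $(\Id-\bm V)^{-1}$. Your term-by-term approach is slightly more elementary and avoids using the fixed-point structure of $\bm\psi$, at the cost of an extra double-summation bookkeeping step and the monotonicity observation; the paper's argument is shorter because the Neumann series is handled implicitly by the identity for $\bm\psi$. Both are valid; note that your interchange $R^\infty_{\bm\psi}(d)\preceq\sum_{n\ge1}R^\infty_{\bm v^{\star n}}(d)$ rests on nonnegativity (Tonelli plus subadditivity of the sup), which holds here but is worth flagging.
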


\begin{remark}
	Once again this result applies for $\bm{v}$ possibly not nonnegative by using $\vert \bm{v}\vert$ instead of $\bm{v}$ in the upper bound.
\end{remark}

The following result establishes that $\bm{\psi}$ exhibits polynomial decay whenever $\bm{v}$ does.

\begin{proposition}\label{prop4.3}
	Let $\bm{v}$ be a nonnegative bivariate matrix-valued functions. Suppose that $\bm{V} =\Vityone{\bm{v}} \in\Ge{\alpha}{K}$ with $\alpha<1$ and that
	\[\Res_{\bm{v}}^{\infty}(t) \preceq \frac{A}{(1+t)^{\gamma}} \bm{V}, \quad \forall t\geq 0.\]
	Then the function $\bm{\psi} = \sum_{n\geq 1} \bm{v}^{\star n}$ is well defined and satisfies, for all $t> 0$ and all $0<\delta <1$,
	\begin{equation*}
	\Res_{\bm{\psi}}^{\infty}(t) \preceq \frac{A \big(1+\frac{\gamma}{\delta \log(1/\alpha)}\big)^{\gamma}}{(1+t)^{\gamma(1-\delta)}}\bm{V} (\Id-\bm{V})^{-2} + \bm{V}^{\lfloor\frac{\gamma(1-\delta) \log(1+t)}{\log(1/\alpha)}\rfloor+2} (\Id-\bm{V})^{-2}.
	\end{equation*}
	In particular,
	\begin{equation*}
		\Vvert \Res_{\bm{\psi}}^{\infty}(t) \Vvert_{\infty} \leq \frac{\alpha K(K+1) \Big[ A\big(1+\frac{\gamma}{\delta \log(1/\alpha)}\big)^{\gamma} + 1\Big]}{(1-\alpha)^2(1+t)^{\gamma(1-\delta)}}, \quad \forall t>0.
	\end{equation*}
\end{proposition}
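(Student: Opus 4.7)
I would derive this as a quantitative corollary of Proposition~\ref{prop4.2}, which yields, for every $p\in(0,1)$,
\[R_{\bm{\psi}}^{\infty}(t) \preceq \sum_{k=0}^{\infty} \bm{V}^k R_{\bm{v}}^{\infty}\big((1-p)p^k t\big) (\Id-\bm{V})^{-1}.\]
The key, non-obvious choice is $p=\alpha^{\delta/\gamma}$, so that $p^{-\gamma}=\alpha^{-\delta}$ aligns exactly with the Gelfand decay of $\bm{V}^k$. At each index $k$ two bounds are available for $R_{\bm{v}}^{\infty}\big((1-p)p^k t\big)$: the power-law hypothesis, and the trivial inequality $R_{\bm{v}}^{\infty}\preceq \bm{V}$. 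I would apply the first on the short range $k\leq k^{\star}$ and the second on the tail $k\geq k^{\star}+1$, with cutoff $k^{\star}=\lfloor \gamma\log(t)/((1+\delta)\log(1/\alpha))\rfloor$.

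For the short range, the elementary inequality $1+(1-p)p^k t\geq (1-p)p^k(1+t)$ (valid since $(1-p)p^k\leq 1$) gives $(1+(1-p)p^kt)^{-\gamma}\leq (1-p)^{-\gamma}\alpha^{-k\delta}(1+t)^{-\gamma}$. Bounding $\alpha^{-k\delta}\leq \alpha^{-k^{\star}\delta}\leq t^{\gamma\delta/(1+\delta)}$ uniformly and using $\sum_{k=0}^{k^{\star}}\bm{V}^{k+1}\preceq \bm{V}(\Id-\bm{V})^{-1}$ contributes at most $A(1-p)^{-\gamma}t^{\gamma\delta/(1+\delta)}(1+t)^{-\gamma}\bm{V}(\Id-\bm{V})^{-2}$. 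For the tail, the trivial bound telescopes into the geometric sum $\sum_{k\geq k^{\star}+1}\bm{V}^{k+1}(\Id-\bm{V})^{-1}=\bm{V}^{k^{\star}+2}(\Id-\bm{V})^{-2}$, producing the second term of the stated bound verbatim. Finally, for $t\geq 1$ one has $t^{\gamma\delta/(1+\delta)}\leq (1+t)^{\gamma\delta/(1+\delta)}$ and $\gamma/(1+\delta)\geq \gamma(1-\delta)$ (equivalent to $1\geq 1-\delta^2$), hence $(1+t)^{-\gamma}t^{\gamma\delta/(1+\delta)}\leq (1+t)^{-\gamma(1-\delta)}$, which upgrades the first contribution to the form stated in the proposition.

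For the \emph{in particular} operator-norm bound, I would take $\Vvert\cdot\Vvert_{\infty}$ on each matrix piece and use $\Vvert\bm{V}^n\Vvert_{\infty}\leq K\alpha^n$. Writing $\bm{V}(\Id-\bm{V})^{-2}=\sum_{n\geq 0}(n+1)\bm{V}^{n+1}$ gives $\Vvert\bm{V}(\Id-\bm{V})^{-2}\Vvert_{\infty}\leq K\alpha/(1-\alpha)^2$, and similarly $\Vvert\bm{V}^{k^{\star}+2}(\Id-\bm{V})^{-2}\Vvert_{\infty}\leq K\alpha^{k^{\star}+2}/(1-\alpha)^2$. Combined with $\alpha^{k^{\star}+2}\leq \alpha\, t^{-\gamma/(1+\delta)}\leq 2^{\gamma}\alpha(1+t)^{-\gamma(1-\delta)}$ for $t\geq 1$, this produces the $2^{\gamma}$ summand of the stated constant. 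To replace $(1-p)^{-\gamma}$ by an explicit expression, I would invoke the convexity inequality $1-e^{-\beta}\geq \beta/(1+\beta)$ with $\beta=\delta\log(1/\alpha)/\gamma$, giving $(1-p)^{-\gamma}\leq (1+\gamma/(\delta\log(1/\alpha)))^{\gamma}$, which is the $A$-dependent summand.

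The main obstacle is the coupled calibration of $p$ and $k^{\star}$. The parameter $p$ must be tuned so that $p^{-\gamma}$ exactly compensates the geometric decay of $\bm{V}^k$ in the short-range partial sum, which essentially forces $p=\alpha^{\delta/\gamma}$ (the free parameter being $\delta$). Simultaneously $k^{\star}$ must be large enough that $\alpha^{k^{\star}}$ decays at the target rate, yet small enough that the polynomial blow-up $\alpha^{-k^{\star}\delta}$ produced by the power-law piece is absorbed by the $(1+t)^{-\gamma}$ factor up to the arbitrarily small loss $\delta$. Any other calibration either breaks summability of the short-range sum or yields a strictly worse exponent than $\gamma(1-\delta)$.
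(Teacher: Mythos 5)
Your proof is correct and essentially identical to the paper's: same invocation of Proposition~\ref{prop4.2}, same parameter calibration $p=\alpha^{\delta/\gamma}$ and cutoff $k^{\star}=\lfloor\gamma\log t/((1+\delta)\log(1/\alpha))\rfloor$, same split into a short-range sum treated with the power-law hypothesis and a tail treated with the crude bound $R_{\bm{v}}^{\infty}\preceq\bm{V}$, and the same convexity estimate $(1-e^{-\beta})^{-1}\leq 1+\beta^{-1}$ to produce the explicit constant. One small point: your series computation $\Vvert\bm{V}(\Id-\bm{V})^{-2}\Vvert_{\infty}\leq K\alpha/(1-\alpha)^2$ is actually sharper than the paper's $K(K+1)\alpha/(1-\alpha)^2$ (which is obtained by splitting $(\Id-\bm{V})^{-2}$ and using submultiplicativity of the operator norm), so you prove a marginally better constant than the one you describe yourself as recovering -- harmless, since it still implies the stated bound.
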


Similarly to the polynomial case, the following result establishes that $\bm{\psi}$ exhibits exponential decay whenever $\bm{v}$ does.

\begin{proposition}\label{prop4.4}
	Let $\bm{v}$ be a nonnegative bivariate matrix-valued functions. Suppose that $\bm{V} =\Vityone{\bm{v}} \in\Ge{\alpha}{K}$ with $\alpha<1$ and that there exist $A\geq 0$ and $c>0$ such that
	\[\Res_{\bm{v}}^{\infty}(t) \preceq A e^{-ct} \bm{V}, \quad \forall t\geq 0.\]
	Then the function $\bm{\psi} = \sum_{n\geq 1} \bm{v}^{\star n}$ is well defined and satisfies for all $t>0$ and all $0<\varepsilon< \frac{c(1-\alpha)}{(1-\alpha)+A\alpha}$, \[ \Res_{\bm{\psi}}^{\infty}(t) \preceq  e^{-\varepsilon t} \bm{V}_{\varepsilon}(\Id-\bm{V}_{\varepsilon})^{-1} \] with \[\bm{V}_{\varepsilon} \preceq \alpha \big(1 + \frac{A\varepsilon}{c-\varepsilon}\big) \bm{V}.\] In particular,
	\[\Vvert \Res_{\bm{\psi}}^{\infty}(t) \Vvert_{\infty} \leq \frac{ K \alpha \big(1 + \frac{A\varepsilon}{c-\varepsilon}\big)
	}{1-\alpha \big(1 + \frac{A\varepsilon}{c-\varepsilon}\big)} e^{-\varepsilon t}, \quad \forall t>0.\]
\end{proposition}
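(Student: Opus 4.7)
The plan is to reduce the exponential tail estimate to an $L^\infty L^1$ estimate on an exponentially weighted version of $\bm\psi$. Concretely, define $\tilde{\bm v}(s,t):=e^{\varepsilon(t-s)}\bm v(s,t)$. A direct substitution in the definition of bivariate convolution shows that $\tilde{\bm v}^{\star n}(s,t)=e^{\varepsilon(t-s)}\bm v^{\star n}(s,t)$, and hence $\tilde{\bm\psi}:=\sum_{n\geq 1}\tilde{\bm v}^{\star n}(s,t)=e^{\varepsilon(t-s)}\bm\psi(s,t)$. Since $\tilde{\bm\psi}\succeq 0$, for any $s\in\R$ and $d>0$
\[
R_{\bm\psi}(s,s+d)=\int_{s+d}^\infty e^{-\varepsilon(x-s)}\tilde{\bm\psi}(s,x)\,dx\preceq e^{-\varepsilon d}\Vone{\tilde{\bm\psi}(s,\cdot)},
\]
so taking the supremum in $s$ gives $R_{\bm\psi}^\infty(d)\preceq e^{-\varepsilon d}\Vityone{\tilde{\bm\psi}}$. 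This isolates the decay $e^{-\varepsilon d}$ and reduces everything to bounding $\Vityone{\tilde{\bm\psi}}$.

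Next I would apply Proposition \ref{prop4.1} iteratively to get $\Vityone{\tilde{\bm v}^{\star n}}\preceq\bm V_\varepsilon^n$ with $\bm V_\varepsilon:=\Vityone{\tilde{\bm v}}$, so that $\Vityone{\tilde{\bm\psi}}\preceq\sum_{n\geq 1}\bm V_\varepsilon^n=\bm V_\varepsilon(\Id-\bm V_\varepsilon)^{-1}$ whenever $\spr(\bm V_\varepsilon)<1$. The core computation is then to estimate $\bm V_\varepsilon$ componentwise. For fixed $s$, I would split $\int_\R e^{\varepsilon(t-s)}\bm v(s,t)\,dt$ at $t=s$. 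On $\{t\leq s\}$ the weight is $\leq 1$, so this part is dominated by $\int_{-\infty}^s\bm v(s,t)\,dt$. On $\{t\geq s\}$, I would integrate by parts using $\partial_t R_{\bm v}(s,t)=-\bm v(s,t)$: the boundary term at infinity vanishes because the hypothesis $R_{\bm v}(s,s+d)\preceq A e^{-cd}\bm V$ combined with $\varepsilon<c$ forces $e^{\varepsilon(t-s)}R_{\bm v}(s,t)\to 0$; the boundary term at $t=s$ produces $R_{\bm v}(s,s)$; and the remaining integral is $\varepsilon\int_0^\infty e^{\varepsilon d}R_{\bm v}(s,s+d)\,dd\preceq\frac{A\varepsilon}{c-\varepsilon}\bm V$. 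Adding the two regions gives
\[
\int_\R e^{\varepsilon(t-s)}\bm v(s,t)\,dt\preceq\Vone{\bm v(s,\cdot)}+\frac{A\varepsilon}{c-\varepsilon}\bm V\preceq\Bigl(1+\tfrac{A\varepsilon}{c-\varepsilon}\Bigr)\bm V,
\]
so that $\bm V_\varepsilon\preceq(1+\tfrac{A\varepsilon}{c-\varepsilon})\bm V$ as claimed.

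Finally I would derive the operator-norm bound using $\bm V\in\Ge{\alpha}{K}$. By non-negativity, $\bm V_\varepsilon^n\preceq(1+\tfrac{A\varepsilon}{c-\varepsilon})^n\bm V^n$, so $\Vvert\bm V_\varepsilon^n\Vvert_\infty\leq K\bigl[\alpha(1+\tfrac{A\varepsilon}{c-\varepsilon})\bigr]^n$. The hypothesis $\varepsilon<\tfrac{c(1-\alpha)}{(1-\alpha)+A\alpha}$ is algebraically equivalent to $\alpha(1+\tfrac{A\varepsilon}{c-\varepsilon})<1$, which both certifies $\spr(\bm V_\varepsilon)<1$ (legitimising the geometric series above) and yields
\[
\Vvert R_{\bm\psi}^\infty(t)\Vvert_\infty\leq e^{-\varepsilon t}\sum_{n\geq 1}\Vvert\bm V_\varepsilon^n\Vvert_\infty\leq e^{-\varepsilon t}\cdot\frac{K\alpha(1+\tfrac{A\varepsilon}{c-\varepsilon})}{1-\alpha(1+\tfrac{A\varepsilon}{c-\varepsilon})}.
\]

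There is no real conceptual obstacle: the argument is a clean change of variables plus an integration by parts. The only point demanding care will be the componentwise integration by parts and verifying the vanishing of the boundary term at infinity uniformly in $s$, together with checking at the end that the algebraic threshold on $\varepsilon$ is exactly the one guaranteeing convergence of the geometric series for $\Vvert\bm V_\varepsilon^n\Vvert_\infty$.
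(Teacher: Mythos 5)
Your proof is correct and follows essentially the same route as the paper: define the exponentially tilted kernel, reduce the residual-integral decay to an $L^\infty L^1$ bound on the tilted $\bm\psi$, and estimate $\bm V_\varepsilon=\Vityone{\tilde{\bm v}}$ via integration by parts against $R_{\bm v}$. The only cosmetic difference is that you split the integral at $t=s$ before integrating by parts on $[s,\infty)$, whereas the paper integrates by parts over all of $\R$ (noting both boundary terms vanish) and splits afterward; both yield the same bound $\bm V_\varepsilon\preceq(1+\frac{A\varepsilon}{c-\varepsilon})\bm V$.
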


\begin{remark}
	In the one dimensional case, if one take $v(s,t) = a e^{-b(t-s)}\ind_{s\leq t}$ with $b>a >0$, then in the proof of Lemma 9 of \cite{tchouanti_separation_2024}, the authors have shown that \[\bm{\psi}(s,t) = a e^{-(b-a)(t-s)}\ind_{s\leq t},\] which proves that \[ \Res_{\bm{\psi}}^{\infty}(t) \lesssim e^{-(b-a)t}.\] This choice for $v$ leads to $\Res_v^{\infty}(t)= \frac{a}{b}e^{-bt}= 1\times e^{-bt} V$ with $V=a/b \in\Ge{a/b}{1}$. Thus Proposition \ref{prop4.4} gives the following limit value for $\varepsilon$, \[\frac{b(1-a/b)}{(1-a/b) + 1\times a/b} = b-a,\]
	which is the exponential decay rate of $\Res_{\bm{\psi}}^{\infty}$.
\end{remark}

\subsection{Proof of Appendix \ref{appendixA.2} results}

\begin{proof}[Proof of Proposition \ref{prop4.1}]
	Let $s\in\R$, then
	\begin{align*}
		\itg \vert \bm{v}\star\bm{w}(s,t) \vert dt  = \itg \Big\vert\itg \bm{v}(s,x) \bm{w}(x,t)dx \Big\vert dt & \preceq \itg \itg \vert \bm{v}(s,x) \vert  \vert \bm{w}(x,t) \vert dt dx \\
		& \preceq \itg \Big(\vert \bm{v}(s,x) \vert \itg   \vert \bm{w}(x,t) \vert dt\Big) dx \\
		& \preceq \itg \vert \bm{v}(s,x) \vert dx \Vityone{
		\bm{w}} \\
		& \preceq \Vityone{\bm{v}}\Vityone{\bm{w}}.
	\end{align*}
	Similarly, for any $s,t\in\R$ we have
	\begin{align*}
		\vert \bm{v}\star\bm{w}(s,t) \vert  =  \Big\vert\itg \bm{v}(s,x) \bm{w}(x,t)dx \Big\vert & \preceq \itg \vert \bm{v}(s,x)\vert \vert \bm{w}(x,t)\vert dx\\
		& \preceq \itg \vert \bm{v}(s,x)\vert  dx \Vity{\bm{w}(\cdot,t)}\\
		& \preceq \Vityone{\bm{v}} \Vityty{\bm{w}}.\\
	\end{align*}
\end{proof}

\begin{proof}[Proof of Proposition \ref{prop4.2.0}]
	Let us look at the residual integral of $\bm{v}\star\bm{w}$. For any $s\in\R$ and $d\geq 0$ we have
\begin{align*}
	\Res_{\bm{v}\star\bm{w}}(s,s+d)  = \itg_{s+d}^{\infty} \itg \bm{v}(s,x) \bm{w}(x,t)dxdt & = \itg \itg_{s+d}^{\infty}  \bm{v}(s,x) \bm{w}(x,t)dtdx \\
	& = \itg \bm{v}(s,x) \Res_{\bm{w}}(x,s+d)dx.
\end{align*}
By splitting the integral at $s+pd$ with $p\in [0,1]$ we have
\begin{align*}
	\Res_{\bm{v}\star\bm{w}}(s,s+d) & = \itg_{-\infty}^{s+pd} \bm{v}(s,x) \Res_{\bm{w}}(x,s+d)dx + \itg_{s+pd}^{\infty} \bm{v}(s,x) \Res_{\bm{w}}(x,s+d)dx\\
	& \preceq \itg_{-\infty}^{s+pd} \bm{v}(s,x) \Res_{\bm{w}}^{\infty}(s+d-x)dx + \itg_{s+pd}^{\infty} \bm{v}(s,x) \Vityty{\Res_{\bm{w}}}dx\\
	& \preceq \itg_{-\infty}^{s+pd} \bm{v}(s,x) dx \Res_{\bm{w}}^{\infty}(d-pd) + \itg_{s+pd}^{\infty} \bm{v}(s,x) dx \Vityone{\bm{w}}\\
	& \preceq \Vityone{\bm{v}} \Res_{\bm{w}}^{\infty}(qd) + \Res_{\bm{v}}(s,s+pd) \Vityone{\bm{w}}.
\end{align*}
Taking the $L^{\infty}$ norm on $s$ on both side leads to the result.
\end{proof}

\begin{proof}[Proof of Proposition \ref{prop4.2}]\label{proofs A}
	The series defining $\bm{\psi}$ converges since
	\[\Big\Vert\sum_{n\geq 1} \bm{v}^{\star n}\Big\Vert_{L^{\infty}L^1} \preceq \sum_{n\geq 1} \bm{V}^n \preceq \bm{V}(\Id-\bm{V})^{-1}.\] 
	Recall that $\bm{\delta_0} = \delta_0 \Id $ is the unit for $\star$. By definition we have $\bm{\psi} = \bm{v} \star (\bm{\psi} + \bm{\delta_0})$. Since $\Res_{\bm{\delta_0}}^{\infty}(t) = 0$ for $t>0$, applying Proposition \ref{prop4.2.0} gives
	\[\Res^{\infty}_{\bm{\psi}}(d) \preceq \bm{V} \Res^{\infty}_{\bm{\psi}}(pd) + \Res^{\infty}_{\bm{v}}((1-p)d)(\Id-\bm{V})^{-1}, \quad d>0, \ 0<p<1. \]
	Iterating $n$ times leads to
	\[\Res^{\infty}_{\bm{\psi}}(d) \preceq \sum_{k=0}^{n-1} \bm{V}^k \Res^{\infty}_{\bm{v}}((1-p)p^kd)(\Id-\bm{V})^{-1} + \bm{V}^{n} \Res^{\infty}_{\bm{\psi}}(p^n d), \quad d>0, \ 0<p<1. \]
	Since $\Res_{\bm{\psi}}^{\infty}(t) \preceq \bm{V}(\Id-\bm{V})^{-1}$ for any $t\geq 0$ and since $\spr(\bm{V})<1$, taking $n\to\infty$ yields,
	\[\Res^{\infty}_{\bm{\psi}}(d) \preceq \sum_{k=0}^{\infty} \bm{V}^k \Res^{\infty}_{\bm{v}}((1-p)p^kd)(\Id-\bm{V})^{-1}, \quad d>0, \ 0<p<1. \]
\end{proof}

\begin{proof}[Proof of Proposition \ref{prop4.3}]
	Let $t > 0$. We use Proposition \ref{prop4.2} with $p = \exp(-\delta / \varepsilon)$ and cut the sum at $k_0 = \lfloor\varepsilon \log(1+ t)\rfloor \geq 0$ with \[\varepsilon = \frac{\gamma(1-\delta)}{\log(1/\alpha)}, \quad \text{and thus} \quad p = \exp(-\delta/\varepsilon) = \exp\Big(-\frac{\delta \log(1/\alpha)}{\gamma (1-\delta)} \Big). \]
	Thus we have
	\begin{align*}
		\Res^{\infty}_{\bm{\psi}}(t) & \preceq \sum_{k=0}^{k_0} \bm{V}^k \Res^{\infty}_{\bm{v}}((1-p)p^k t)(\Id-\bm{V})^{-1} + \sum_{k=k_0+1}^{\infty} \bm{V}^k \Res^{\infty}_{\bm{v}}((1-p)p^k t)(\Id-\bm{V})^{-1} \\
		& \preceq  (\Id-\bm{V})^{-1} \Res_{\bm{v}}((1-p)p^{k_0} t)(\Id-\bm{V})^{-1} + (\Id-\bm{V})^{-1} \bm{V}^{k_0+2} (\Id-\bm{V})^{-1}. \\
	\end{align*}
	By definition we have
	\[ (1-p)p^{k_0} t\geq (1-p)p^{\varepsilon\log(1+ t)} t = ( 1 - p) (1+t)^{-\delta} t.\]
	Thus we have
	\begin{align*}
		\Res^{\infty}_{\bm{v}}((1-p)p^{k_0} t) & \preceq \frac{A}{\Big[ 1+ ( 1 -p) (1+t)^{-\delta}t\Big]^{\gamma}} \bm{V} \\
		& \preceq \frac{A (1-p)^{-\gamma}}{( 1+ t)^{\gamma(1-\delta)}} \bm{V}
	\end{align*}
	where we used the elementary inequality \[\frac{1}{1+c(1+x)^{-a}x} \leq  \frac{c^{-1}}{(1+x)^{1-a}}, \quad x\geq 0, \ a,c\in (0,1). \]
	For $x>0$ we have $(1-e^{-x})^{-1}\leq 1+x^{-1}$ thus,
	\begin{align*}
		(1-p)^{-\gamma}  = \Big(1 - e^{-\frac{\delta \log(1/\alpha)}{\gamma(1-\delta)}}\Big)^{-\gamma}  \leq \Big(1 + \frac{\gamma(1-\delta)}{\delta \log(1/\alpha)}\Big)^{\gamma} & \leq \Big(1 + \frac{\gamma}{\delta \log(1/\alpha)}\Big)^{\gamma}.
	\end{align*}
	It proves the first result since $\bm{V}$ commutes with $(\Id-\bm{V})^{-1}$. Let us now check the second bound on the norm. Since we have for $j\geq 1$,
	\[ \Vvert \bm{V}^j(\Id-\bm{V})^{-1} \Vvert_{\infty} \leq \sum_{i\geq j} K\alpha^i = \frac{K \alpha^j}{1-\alpha},\]
	and that
	\[\Vvert (\Id-\bm{V})^{-1} \Vvert_{\infty} \leq 1+\sum_{i\geq 1} K\alpha^i \leq \frac{K +1}{1-\alpha},\]
	it is clear that 
	\begin{align*}
		\Vvert \Res^{\infty}_{\bm{\psi}}(t) \Vvert_{\infty}  & \leq \frac{K(K+1) \alpha}{(1-\alpha)^2} \times \frac{A \big(1 + \frac{\gamma}{\delta \log(1/\alpha)}\big)^{\gamma}}{( 1+ t)^{\gamma(1-\delta)}} + \frac{K(K+1) \alpha^{k_0+2}}{(1-\alpha)^2}.
	\end{align*} 
	Since we have
	\[ \alpha^{k_0+2} \leq \alpha \alpha^{\varepsilon \log(1+t)} = \alpha (1+t)^{-\gamma(1-\delta)},\]
	the bound is clear.
\end{proof}

\begin{proof}[Proof of Proposition \ref{prop4.4}]
	For $0< \varepsilon < \frac{c(1-\alpha)}{(1-\alpha)+ A\alpha }$ consider $\bm{v}_{\varepsilon}(s,t) = \bm{v}(s,t) e^{\varepsilon(t-s)}$. We can express $\psi$ as follows,
	\[\bm{\psi}(s,t) = e^{-\varepsilon(t-s)} \sum_{n\geq 1} \bm{v}_{\varepsilon}^{\star n} (s,t) = e^{-\varepsilon(t-s)} \bm{\psi}_{\varepsilon}(s,t) .\]
	Thus we have
	\[ \Res_{\bm{\psi}}^{\infty}(t) \preceq e^{-\varepsilon t } \Res_{\bm{\psi}_{\varepsilon}}^{\infty}(t).\]
	Then, for any $t$ we have $\Res_{\bm{\psi}_{\varepsilon}}^{\infty}(t) \preceq \bm{V}_{\varepsilon}(\Id-\bm{V}_{\varepsilon})^{-1}$ where
	\[\bm{V}_{\varepsilon} = \Vityone{\bm{v}_{\varepsilon}}.\]
	Let $s\in\R$, we have
	\begin{align*}
		\itg \bm{v}(s,t)e^{\varepsilon(t-s)}dt & = \Big[ -\Res_{\bm{v}}(s,t) e^{\varepsilon(t-s)}\Big]_{t=-\infty}^{t=\infty} + \varepsilon \itg \Res_{\bm{v}}(s,t) e^{\varepsilon(t-s)} dt\\
		& \preceq 0 + \varepsilon \itg_{-\infty}^s \Res_{\bm{v}}(s,t) e^{\varepsilon(t-s)} dt + \varepsilon \itg_s^{\infty} \Res_{\bm{v}}(s,t) e^{\varepsilon(t-s)} dt\\
		& \preceq \varepsilon \itg_{-\infty}^0 e^{\varepsilon t}dt \cdot \bm{V} + A \varepsilon \itg_0^{\infty} e^{-(c-\varepsilon)t} dt \cdot \bm{V}\\
		& =  \Big(1 + \frac{A\varepsilon}{c-\varepsilon}\Big)\bm{V}.
	\end{align*}
	Thus $\bm{V}_{\varepsilon} \preceq \big(1 + \frac{A\varepsilon}{c-\varepsilon}\big)\bm{V}$. By the assumption on $\varepsilon$, we have, $\alpha\big(1+\frac{A\varepsilon}{c-\varepsilon}\big)<1$, hence $\spr(\bm V_\varepsilon)<1$. In fact we have $\bm{V}_{\varepsilon} \in \Ge{\alpha \big(1 + \frac{A\varepsilon}{c-\varepsilon}\big)}{K}$, from which the bound
	\[\Vvert \Res_{\bm{\psi}}^{\infty}(t) \Vvert_{\infty} \leq \frac{ K \alpha \big(1 + \frac{A\varepsilon}{c-\varepsilon}\big)
	}{1-\alpha \big(1 + \frac{A\varepsilon}{c-\varepsilon}\big)} e^{-\varepsilon t}, \quad t>0\]
	follows, which concludes the proof.
\end{proof}

\section*{Acknowledgements}
I would like to thank my PhD advisors, Vincent Rivoirard and Patricia Reynaud-Bouret for their advices and helpful comments. I also thank the anonymous Referees, the Editor and Associate Editor for their careful reading and valuable suggestions, which helped improve the manuscript.

\bibliographystyle{alea3}
\bibliography{bibfile}

\end{document}